\newcommand{\crefnames}[3]{%
	\@for\next:=#1\do{%
		\expandafter\crefname\expandafter{\next}{#2}{#3}%
	}%
}
\setlist[itemize]{wide = 0pt, labelwidth = 2em, labelsep*=0em, itemindent = 0pt, leftmargin = \dimexpr\labelwidth + \labelsep\relax, noitemsep,topsep = 1ex,}
\setlist[enumerate]{wide = 0pt, labelwidth = 2em, labelsep*=0em, itemindent = 0pt, leftmargin = \dimexpr\labelwidth + \labelsep\relax, noitemsep,topsep = 1ex}
\theoremstyle{plain}
\newtheorem{thmx}{Theorem}
\renewcommand{\thethmx}{\Alph{thmx}} 
\newtheorem{theorem}{Theorem}[section]  
\newtheorem{lemma}[theorem]{Lemma}
\newtheorem{claim}[theorem]{Claim} 
\newtheorem{proposition}[theorem]{Proposition}
\theoremstyle{definition}
\newtheorem{definition}[theorem]{Definition} 
\theoremstyle{remark}
\newtheorem{remark}[theorem]{Remark}
\newcommand{\GL}{{\rm GL}}
\newcommand{\bZ}{\mathbb{Z}}
\newcommand{\bF}{\mathbb{F}}
\newcommand{\mxp}{M_{\rm B}(X,N)_{\bF_p}}
\newcommand{\mzp}{M_{\rm B}(Z,N)_{\bF_p}}
\newcommand{\myp}{M_{\rm B}(Y,N)_{\bF_p}}
\numberwithin{equation}{section}  
\theoremstyle{plain}
\newlist{thmlist}{enumerate}{1}
\setlist[thmlist]{wide = 0pt, labelwidth = 2em, labelsep*=0em, itemindent = 0pt, leftmargin = \dimexpr\labelwidth + \labelsep\relax, noitemsep,topsep = 1ex, font=\normalfont, label=(\roman*), ref=\thetheorem.(\roman{thmlisti})}
\newlist{thmenum}{enumerate}{1} 
\setlist[thmenum]{wide = 0pt, labelwidth = 2em, labelsep*=0em, itemindent = 0pt, leftmargin = \dimexpr\labelwidth + \labelsep\relax, noitemsep,topsep = 1ex, font=\normalfont, label=(\roman*), ref=\thethmx.(\roman{thmenumi})}
\newlist{corlist}{enumerate}{1} 
\setlist[corlist]{wide = 0pt, labelwidth = 2em, labelsep*=0em, itemindent = 0pt, leftmargin = \dimexpr\labelwidth + \labelsep\relax, noitemsep,topsep = 1ex, font=\normalfont, label=(\roman*), ref=\thecorx.(\roman{corlisti})}
\crefname{lemma}{Lemma}{Lemmas} 
\crefname{conjecture}{Conjecture}{Conjectures}
\crefname{theorem}{Theorem}{Theorems}
\crefname{proposition}{Proposition}{Propositions}
\crefname{definition}{Definition}{Definitions}
\crefname{remark}{Remark}{Remarks}
\crefname{corollary}{Corollary}{Corollaries}
\crefname{corx}{Corollary}{Corollaries}
\crefname{problem}{Problem}{Problems}
\crefname{thmx}{Theorem}{Theorems}
\crefname{claim}{Claim}{Claims}
\crefname{assumption}{Assumption}{Assumptions}
\crefname{main}{Main Theorem}{Main Theorems}
\newcommand{\C}{\mathbb{C}}
\newcommand{\R}{\mathbb{R}}
\newcommand{\Q}{\mathbb{Q}}
\newcommand{\A}{\mathbb{A}}
\newcommand{\K}{\mathbb{K}}
\newcommand{\spec}{{\rm Spec}}
\begin{document}

\title[Morphisms between character varieties]{Quasi-finiteness of morphisms between character varieties}

{
\author[Y. Deng]{Ya Deng}
\email{ya.deng@math.cnrs.fr}
\address{CNRS, Institut \'Elie Cartan de Lorraine, Universit\'e de Lorraine, F-54000 Nancy,
	France.}
\urladdr{https://ydeng.perso.math.cnrs.fr} 
}

{
\author[Y. Liu]{Yuan Liu}
\email{yuanliu@ustc.edu.cn}
\address{Institute of Geometry and Physics, University of Science and Technology of China, Hefei, China.} 
}

\date{\today}
\begin{abstract}
 Let $f: Y\to X$ be a morphism between   smooth complex quasi-projective varieties and $Z$ be the closure of $f(Y)$   with $\iota: Z\to X$ the inclusion map.  We prove that: 
 \begin{itemize}
  	\item for any field $\K$, there exist  finitely many semisimple representations $\{\tau_i:\pi_1(Z)\to {\rm GL}_N(\overline{k})\}_{i=1,\ldots,\ell}$ with $k\subset \K$ the minimal field contained in $\K$ such that  if $\varrho:\pi_1(X)\to \GL_{N}(\K)$ is  any  representation satisfying $[f^*\varrho]=1$,    then  $[\iota^*\varrho]=[\tau_i]$ for some $i$.  
 	\item  The induced morphism between $\GL_{N}$-character varieties (of any characteristic) of $\pi_1(X)$ and $\pi_1(Y)$   is quasi-finite if ${\rm Im}[\pi_1(Z)\to \pi_1(X)]$ is a finite index subgroup of $\pi_1(X)$. 
 \end{itemize} These results extend the main results by Lasell \cite{Lasell1995} and   Lasell-Ramachandran \cite{LR1996} from smooth complex projective varieties to quasi-projective cases with richer structures.   
\end{abstract}
\maketitle

\section{Main results} 
Let $f: Y\to X$ be a morphism between smooth complex projective varieties, and let $Z$ be the closure of $f(Y)$. By the work of Deligne \cite[Proposition 8.2.7]{DeligneIII},  the kernel of the map from $H^*(X, \C)$ to
$H^*(Y,\C)$ induced by $f$  is the same as that of   $H^*(X, \C)\to H^*(Z,\C)$.  In this paper, we will study this phenomenon in the context of non-abelian Hodge theories.  
Our first theorem is as follows.
\begin{thmx}[=\Cref{thm:trivial,thm:trivial2}]\label{main}
Let $f: Y\to X$ be a morphism between   smooth quasi-projective varieties and $Z$ be the closure of the image  $f(Y)$. Let $\iota:Z\hookrightarrow X$ be the natural inclusion.  Let $K$ be any  field and let $k\subset K$ be the minimal field contained in $K$, which is $\Q$ if ${\rm char}\, K=0$ and is $\bF_p$ if ${\rm char}\, K=p>0$. Then there exists finitely many semisimple representations $\{\tau_i:\pi_1(Z)\to {\rm GL}_N(\overline{k})\}_{i=1,\ldots,\ell}$ with  $\tau_i(\pi_1(Z))$ being finite groups such that
	 \begin{thmlist}
	 	\item    
	 	if  ${\rm char}\, K=0$ and $\varrho:\pi_1(X)\to {\rm GL}_N(K)$ is a semisimple representation with  $f^*\varrho=1$,  then    $\iota^*\varrho:\pi_1(Z)\to {\rm GL}_N(K)$ is conjugate to some $\tau_i:\pi_1(Z)\to \GL_{N}(\overline{\Q})$. In particular,  $\iota^*\varrho(\pi_1(Z))$ is a finite group.  
	 	\item 	 	if   ${\rm char}\, K=p>0$ and  $\varrho:\pi_1(X)\to {\rm GL}_N(K)$ is a linear  representation such that the semisimplification of $f^*\varrho$ is trivial, then  the semisimplification of $\iota^*\varrho:\pi_1(Z)\to {\rm GL}_N(K)$ is conjugate to some $\tau_i:\pi_1(Z)\to \GL_{N}(\overline{\bF}_p)$, and $\iota^*\varrho(\pi_1(Z))$ is a finite group.  
	 \end{thmlist}  
\end{thmx}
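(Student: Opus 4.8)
\emph{Reductions.} Replacing $Y$ by a dense Zariski-open subset changes neither $\mathrm{Im}[\pi_1(Y)\to\pi_1(X)]$ (so the hypotheses on $f^{*}\varrho$ persist) nor the closure $Z$ of the image; replacing $Z$ by a resolution of singularities and $Y$ by a suitable modification replaces $\pi_1(Z)$ by a group that surjects onto it, so the statement ``$\iota^{*}\varrho(\pi_1(Z))$ is finite'' is unaffected. Hence we may assume $Z$ smooth and $f\colon Y\to Z$ dominant. Fix good compactifications $(\overline X,D_X)$, $(\overline Z,D_Z)$ with simple normal crossing boundary, and a compactification $\overline Y$ through which $\overline f$ is a morphism $\overline Y\to\overline Z\hookrightarrow\overline X$ (resolve the closure of the graph). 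The conjugacy class of a semisimple representation is a point of the Betti moduli scheme $M_{\mathrm B}(X,N)$, of finite type over the prime field $k$; the locus $V\subseteq M_{\mathrm B}(X,N)$ defined by ``$f^{*}(-)$ is trivial after semisimplification'' is closed and $k$-rational (it is the fibre of $f^{*}$ over the class of the trivial representation), and $\iota^{*}$ induces a $k$-morphism $M_{\mathrm B}(X,N)\to M_{\mathrm B}(Z,N)$. Put $W:=\iota^{*}(V)$, a constructible $k$-subset of $M_{\mathrm B}(Z,N)$. The plan is to prove that $W$ is a finite set of closed points, each the class of a finite-image representation, and then to match each $\iota^{*}\varrho$ of the theorem with one of them.

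\emph{The core at the archimedean place.} Assume $\mathrm{char}\,K=0$; as the traces of $\varrho$ lie in a finitely generated field, we may take $K=\C$ and $\varrho\colon\pi_1(X)\to\GL_N(\C)$ semisimple with $f^{*}\varrho$ trivial. By the Corlette--Simpson correspondence in the quasi-projective case (Mochizuki), $\varrho$ underlies a tame pure-imaginary harmonic bundle, equivalently a polystable parabolic Higgs bundle $(\mathcal E,\theta)$ on $(\overline X,D_X)$ with vanishing parabolic Chern classes. Pulling back along $\overline f$, the harmonic bundle of $f^{*}\varrho$ is the trivial one --- the harmonic metric of a trivial local system is the constant metric, unique up to $\GL_N(\C)/U(N)$ --- so its Higgs field $\overline f^{*}(\iota^{*}\theta)$ vanishes. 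Since $Y\to Z$ is dominant, $\overline f^{*}\Omega^{1}_{\overline Z}(\log D_Z)\to\Omega^{1}_{\overline Y}(\log D_Y)$ is injective at a general point, so $\iota^{*}\theta$, a section of a torsion-free sheaf vanishing on a dense open, is zero. A harmonic bundle with zero Higgs field has flat metric connection; thus $\iota^{*}\varrho$ preserves the pulled-back harmonic metric, i.e.\ $\iota^{*}\varrho$ is conjugate into $U(N)$. In particular $\iota^{*}\varrho$ is semisimple and bounded at the archimedean place; running this over every complex embedding of the relevant trace field, every $\C$-point of $W$ is the class of a representation that is unitary under every complex embedding of its field of definition.

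\emph{From boundedness everywhere to finiteness.} Unitarity at the archimedean places alone does not force finite image (e.g.\ a diagonal matrix with an algebraic, non-root-of-unity unit-circle entry), so for each non-archimedean place $v$ of a number field of definition we argue: were the representation unbounded at $v$, the theory of equivariant pluriharmonic maps to Bruhat--Tits buildings (Gromov--Schoen, with the quasi-projective refinements) would give a non-constant equivariant pluriharmonic map on $\widetilde Z$; its pullback to $\widetilde Y$ has trivial monodromy (since $f^{*}\varrho=1$), and controlling the associated $v$-adic Higgs/spectral data along the dominant morphism $Y\to Z$ and across the boundary exactly as above would force this pullback, hence the original map, to be constant --- a contradiction. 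Consequently every point of $W$ is the class of a representation of $\pi_1(Z)$ bounded at \emph{every} place of a number field of definition, hence (as $\pi_1(Z)$ is finitely generated) of finite image. Since this holds at every point of $W$ (residue fields finitely generated over $\Q$) and a finite-image semisimple representation has character valued in a number field, every point of $W$ has residue field finite over $\Q$; being constructible, $W$ is a finite set of closed points $\{[\tau_1],\dots,[\tau_\ell]\}$ with each $\tau_i\colon\pi_1(Z)\to\GL_N(\overline{\Q})$ of finite image. For $\varrho$ as in (i), $\iota^{*}\varrho$ is semisimple with $[\iota^{*}\varrho]\in W$, so it has the same character as some $\tau_i$ and is therefore conjugate to $\tau_i$ over $\overline K\supseteq\overline{\Q}$. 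The step I expect to be the main obstacle is precisely this non-archimedean descent: the clean archimedean argument uses the vanishing of $\iota^{*}\theta$ and the rigidity of the constant harmonic metric, whereas pluriharmonic maps to buildings attached to the trivial representation are far from unique (already $\log|z|$ on $\C^{\ast}$ is one), so pinning down the spectral data of the equivariant harmonic map along $Y\to Z$ and across $D_Z$ is where the tameness hypotheses and the structure theory of (possibly wild) harmonic bundles must do the real work.

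\emph{Positive characteristic.} In case (ii) one runs the same scheme with the non-archimedean machinery replacing complex non-abelian Hodge theory: reduce to $K$ finitely generated over $\bF_p$ (so all its valuations are available), and for each valuation $v$ apply the Gromov--Schoen--Mochizuki analysis over $\widehat{\overline{K_v}}$; triviality of the semisimplification of $f^{*}\varrho$ together with the dominance of $Y\to Z$ forces the relevant harmonic map on $\widetilde Z$ to be constant, so $(\iota^{*}\varrho)^{\mathrm{ss}}$ is bounded at every valuation of $K$, hence of finite image. The unipotent part is automatic: a finitely generated group of unipotent matrices in characteristic $p$ is nilpotent and torsion, hence finite, so $\iota^{*}\varrho(\pi_1(Z))$ itself is finite. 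Finally, every point of $W\subseteq M_{\mathrm B}(Z,N)_{\bF_p}$ then has finite image, hence residue field a finite field, so $W$ is zero-dimensional and constructible over $\bF_p$, i.e.\ finite; comparing characters identifies $(\iota^{*}\varrho)^{\mathrm{ss}}$ with one of the resulting $\tau_i\colon\pi_1(Z)\to\GL_N(\overline{\bF}_p)$.
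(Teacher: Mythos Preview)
Your overall strategy --- define $V=j_Y^{-1}([1])\subset M_{\rm B}(X,N)$, show that $W=j_Z(V)$ is a finite set of finite-image classes via archimedean and non-archimedean boundedness, then match $\iota^*\varrho$ with some $\tau_i$ --- is exactly the paper's. Two steps, however, do not go through as written.

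\textit{The reduction to smooth $Z$ is wrongly justified.} You assert that a resolution $\tilde Z\to Z$ ``replaces $\pi_1(Z)$ by a group that surjects onto it''; this fails whenever $Z$ is not normal (for a nodal curve $\pi_1(\tilde Z)=1$ while $\pi_1(Z)\cong\bZ$), so finiteness of $\iota^*\varrho$ on $\pi_1(\tilde Z)$ does not give finiteness on $\pi_1(Z)$. The paper never reduces to smooth $Z$. In the archimedean case it works directly with the $\varrho$-equivariant harmonic map $u:\widetilde X_\varrho\to \GL_N(\C)/{\rm U}_N(\C)$: uniqueness of the tame pure-imaginary harmonic metric on $Y$ forces $u\circ\tilde f$ to be constant, hence $u$ is constant on each connected component of $\pi_\varrho^{-1}(f(Y))$, and $\varrho(\mathrm{Im}[\pi_1(Z)\to\pi_1(X)])$ fixes a point of the symmetric space. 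In the non-archimedean case it invokes the reduction theorem of \cite{CDY22} (\cref{thm:KE}) as a black box: for reductive $\varrho$ over a non-archimedean local field, boundedness of $\varrho$ on $\mathrm{Im}[\pi_1(T_o^{\rm norm})\to\pi_1(X)]$ for each component $T_o$ is \emph{equivalent} to boundedness on $\mathrm{Im}[\pi_1(T)\to\pi_1(X)]$. This theorem absorbs exactly the ``main obstacle'' you flag, and it does so without any smoothness assumption on $Z$; your Higgs-field-restriction argument, by contrast, needs $\iota^*\theta$ to live on a smooth $\overline Z$.

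\textit{Finiteness of $W$ is circular as you phrase it.} You deduce finite image from boundedness ``at every place of a number field of definition'', and then use finite image to conclude that the residue field is a number field; but at a non-closed point of $W$ the residue field is a priori transcendental over $\Q$, so you cannot start from a number field. The paper proves zero-dimensionality of $j_Z(M_Y)$ by a different mechanism (\cref{prop:zero_dimensional} in characteristic zero, \cref{lem:finite} in characteristic $p$): if two points of an irreducible component of $\pi_X^{-1}(M_Y)$ had distinct images under $j_Z$, one chooses a curve $C$ through them, separates by a regular function $\psi$ on $M_{\rm B}(Z,N)$, passes to a non-archimedean local field $\mathbb L$ over which everything is defined, and combines unboundedness of $\psi$ on $\A^1(\mathbb L)$ with Yamanoi's compactness lemma (\cref{lem:Yamanoi}) to manufacture an $\mathbb L$-point of $C$ whose restriction to $Z$ is unbounded --- contradicting \cref{thm:KE}. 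Your argument can be salvaged (boundedness at every discrete valuation of a finitely generated field does force the traces into $\overline{\Q}$, by iterating ``a regular function on a complete curve is constant''), but the salvage is essentially the paper's curve argument read at the generic point.
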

We remark that the above theorem generalizes \cite[Theorem 1.1]{LR1996} to the quasi-projective cases. Moreover,  we provide a more explicit description of the finite quotient group factoring through in\cite{LR1996} even when $X$ is projective. 

It is noteworthy that Deligne's theorem also has some analogue in terms of morphisms between character varieties. Let $X$ be a complex quasi-projective variety. The $\GL_{N}$-character variety of the topological fundamental group  $\pi_1(X)$ in characteristic zero, denoted by $M_{\rm B}(X,N)$, is an affine $\Q$-scheme of finite type such that its $\C$-points  $M_{\rm B}(X,N)(\C)$ is identified with the conjugate classes of semisimple representations $\pi_1(X)\to \GL_{N}(\C)$. In the case of characteristic $p>0$,  there exists also the corresponding character variety $\mxp$, which is an affine $\bF_p$-scheme of  finite type such that for any algebraically closed field $K$ with ${\rm char}\, K=p$, the $K$-points  $M_{\rm B}(X, N)(K)$ is identified with the conjugate classes of semisimple representations $\pi_1(X)\to \GL_{N}(K)$.   If  $f: Y\to X$ is a morphism between  smooth complex quasi-projective varieties,  
it induces  morphisms  $j_Y: M_{\mathrm{B}}(X, N) \to M_{\mathrm{B}}(Y, N)$ and $j_Y: \mxp\to \myp$.  
Our second result is as follows. 
\begin{thmx}[=\cref{thm:quasi-finiteness,thm:quasi-finiteness2}]\label{main2}
Let $f: Y\to X$ be a morphism between   smooth quasi-projective varieties, and $Z$ be the closure of $f(Y)$.    If ${\rm Im}[\pi_1(Z)\to \pi_1(X)]$  is a finite index subgroup of $\pi_1(X)$, then  both  $j_Y:M_{\rm B}(X,N)\to M_{\rm B}(Y,N)$  and $j_Y: \mxp\to \myp$ are quasi-finite.  
\end{thmx}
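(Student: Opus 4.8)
The plan is to prove that $j_Y$ has finite geometric fibres, which for a morphism of finite type over $k$ is equivalent to quasi-finiteness; both $j_Y\colon M_{\rm B}(X,N)\to M_{\rm B}(Y,N)$ and $j_Y\colon\mxp\to\myp$ are handled by one characteristic-free argument. Fix an algebraically closed field $K$ with $k\subseteq K$; a geometric point of the target is the conjugacy class $[\sigma]$ of a semisimple $\sigma\colon\pi_1(Y)\to\GL_N(K)$, and $j_Y([\varrho])=[(f^*\varrho)^{\mathrm{ss}}]$. The argument rests on two ingredients: (i) the hypothesis on $\mathrm{Im}[\pi_1(Z)\to\pi_1(X)]$ upgrades to the statement that $H:=\mathrm{Im}[\pi_1(Y)\to\pi_1(X)]$ has finite index in $G:=\pi_1(X)$; and (ii) Clifford theory, which pins down a semisimple representation of $G$, up to finitely many choices, by its restriction to a fixed finite index normal subgroup.

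For (i), factor $f=\iota\circ f'$ with $f'\colon Y\to Z$ the (dominant) corestriction. A standard fact about dominant morphisms of varieties yields that $\mathrm{Im}[\pi_1(Y)\to\pi_1(Z)]$ has finite index in $\pi_1(Z)$: over a suitable dense open $U\subseteq Z$ the morphism $f'$ admits a Stein-type factorisation $(f')^{-1}(U)\to U_1\to U$ with $U_1\to U$ finite étale and $(f')^{-1}(U)\to U_1$ smooth surjective with connected fibres, hence $\pi_1$-surjective; one then combines this with the $\pi_1$-surjectivity of a dense open subvariety and the finite-index image of a finite morphism. Since $\mathrm{Im}[\pi_1(Z)\to\pi_1(X)]$ has finite index by hypothesis and $H=\iota_*\bigl(\mathrm{Im}[\pi_1(Y)\to\pi_1(Z)]\bigr)$, the subgroup $H$ has finite index in $G$. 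Let $H_0:=\bigcap_{g\in G}gHg^{-1}\trianglelefteq G$ be its normal core; it is of finite index in $G$ and contained in $H$.

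For (ii), fix $[\sigma]$; if $j_Y^{-1}([\sigma])$ is empty there is nothing to prove, so choose $[\varrho_0]$ in it. Since $f^*\varrho_0$ factors through $p\colon\pi_1(Y)\twoheadrightarrow H$, so does its semisimplification $\sigma=\bar\sigma\circ p$ with $\bar\sigma$ a semisimple $H$-representation; and, as pullback along the surjection $p$ is exact, fully faithful and semisimplicity-preserving, for $[\varrho]$ in the source the condition $(f^*\varrho)^{\mathrm{ss}}\cong\sigma$ is equivalent to $(\varrho|_H)^{\mathrm{ss}}\cong\bar\sigma$. Restrict further to $H_0$: applying Clifford's theorem (whose proof is valid for arbitrary groups) to each simple summand of the semisimple $G$-module $\varrho$ shows $\varrho|_{H_0}$ is semisimple, and its composition factors — those of $(\varrho|_H)|_{H_0}$, hence determined by the composition factors of $\varrho|_H$, that is by those of $\bar\sigma$ — do not depend on $\varrho$. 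Thus $\varrho|_{H_0}\cong(\bar\sigma|_{H_0})^{\mathrm{ss}}=:\tau$, a representation of $H_0$ depending only on $\sigma$. The counit of the induction--restriction adjunction gives a surjection $\mathrm{Ind}_{H_0}^{G}(\varrho|_{H_0})\twoheadrightarrow\varrho$, so $\varrho$ is a semisimple quotient of the fixed $G$-module $\mathrm{Ind}_{H_0}^{G}\tau$, of dimension $N\cdot[G:H_0]<\infty$. A finite dimensional module has only finitely many semisimple quotients up to isomorphism, so $j_Y^{-1}([\sigma])$ is finite and $j_Y$ is quasi-finite.

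The main obstacle is ingredient (i), and within it the control of $\pi_1$ of the possibly singular variety $Z$; ingredient (ii) is formal, the only point being that in characteristic $p$ the restriction $\varrho|_H$ itself need not be semisimple, which is why the passage to the normal core $H_0$ and the use of Clifford's theorem are essential. One may alternatively invoke \Cref{main}: over the trivial class it identifies $(\iota^*\varrho)^{\mathrm{ss}}$ with one of the finitely many $\tau_i$ of finite image, so that, using the finite index of $H$, every such $\varrho$ has finite image; this settles the fibre over the trivial class directly, and the argument above extends it to all fibres.
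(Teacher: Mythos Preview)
Your step (ii) is a clean piece of representation theory: granted that $H := \mathrm{Im}[\pi_1(Y)\to\pi_1(X)]$ has finite index in $G := \pi_1(X)$, Clifford's theorem applied to the normal core $H_0$, together with the finiteness of semisimple quotients of the finite-dimensional module $\mathrm{Ind}_{H_0}^G\tau$, does give finite fibres and would bypass all the non-abelian Hodge theory.

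The gap is in step (i). The ``$\pi_1$-surjectivity of a dense open subvariety'' that you invoke holds for \emph{normal} varieties but fails once $Z$ is singular, which is exactly the situation here. Take $Z$ a nodal cubic: $\pi_1(Z)\cong\mathbb{Z}$ is generated by the loop running through the node, while $U=Z\setminus\{\text{node}\}\cong\mathbb{C}^*$ has $\pi_1(U)\cong\mathbb{Z}$ generated by a small circle around the puncture; that circle bounds a disc on one local branch of $Z$, so the inclusion induces the \emph{zero} map $\pi_1(U)\to\pi_1(Z)$. Your Stein-factorisation argument therefore only shows that $\mathrm{Im}[\pi_1(Y)\to\pi_1(X)]$ has finite index in $\mathrm{Im}[\pi_1(Z^{\mathrm{norm}})\to\pi_1(X)]$ (this much is correct, since $Z^{\mathrm{norm}}$ is normal and $Y\to Z^{\mathrm{norm}}$ is dominant), but it does not bridge the gap from $Z^{\mathrm{norm}}$ to $Z$. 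That passage is the whole difficulty; were it ``standard'', the reduction theorem \cref{thm:KE} would be a triviality.

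This is why the paper is organised differently: it never asserts that $H$ has finite index. Arguing by contradiction, it extracts from a hypothetical positive-dimensional fibre an unbounded reductive representation $\tau\colon\pi_1(X)\to\GL_N(\mathbb{L})$ over a non-archimedean local field with $f^*\tau$ bounded, and then invokes \cref{thm:KE} --- whose proof rests on harmonic maps to Euclidean buildings --- to pass from boundedness on $\mathrm{Im}[\pi_1(Z^{\mathrm{norm}})\to\pi_1(X)]$ to boundedness on $\mathrm{Im}[\pi_1(Z)\to\pi_1(X)]$. Only then does the finite-index hypothesis on $\mathrm{Im}[\pi_1(Z)\to\pi_1(X)]$ enter, forcing $\tau$ itself to be bounded and yielding the contradiction. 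The non-abelian Hodge input is doing precisely the work that your step (i) cannot do by elementary means.
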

 In characteristic zero, this result generalizes \cite[Theorem 6.1]{Lasell1995} to the quasi-projective case. The case for a positive characteristic is new even in the projective setting.   We remark that our proof is quite different and much simpler than that in \cite{Lasell1995} (see \cref{rem:Lasell}).   Specifically, we avoid the use of the complex variation of Hodge structures, harmonic bundles, and moduli spaces of semistable Higgs bundles as constructed by  Simpson.   For the proofs of \cref{main,main2},  we mainly use the techniques of non-abelian Hodge theories in the non-archimedean cases recently developed in \cite{BDDM,CDY22,DY23,DY23b}.

\medspace

The paper is organized as follows. In \Cref{sec:pre}, we briefly review the character variety and properties of bounded representations and recall a reduction theorem in the quasi-projective case proved in \cite{CDY22}. In \Cref{sec:A}, we study the subscheme of the character varieties $M_\mathrm{B}(X, N)$ in characteristic 0 and    $M_\mathrm{B}(X, N)_{\bF_p}$ in characteristic $p$  consisting of all classes of representations whose pull-back via $f$ is trivial and prove \cref{main}.  \Cref{general} is devoted to the proof of \cref{main2}. 

\section{Technical preliminaries}\label{sec:pre}
\subsection{Character varieties}\label{Betti_moduli_space}
In this section, we briefly recall the definition and properties of the character varieties in any characteristic. For more details and proof of these results, we refer the readers to \cite{Ses77,LM85}.

Assume that $X$ is a complex quasi-projective variety and $\pi_1(X)$ is its topological fundamental group. Even though we can construct a variety of representations and so on for any finitely generated group, we restrict to the fundamental groups of algebraic varieties which is enough for our purpose. 

Let $N$ be a fixed positive integer. The homomorphisms from $\pi_1(X)$ into  $\mathrm{GL}_N$ is represented by an affine scheme $R_\mathrm{B}(X, N)_\bZ$ of finite type defined over $\mathbb{Z}$. For any commutative ring $A$, we have $R_\mathrm{B}(X, N)_\bZ(A)={\rm Hom}(\pi_1(X), {\rm GL}_N(A))$. Note that $\mathrm{GL}_N$ acts on $R_\mathrm{B}(X, N)_\bZ$ by the conjugation and such an action is algebraic. Let $\pi:R_\mathrm{B}(X, N)_\bZ\to M_\mathrm{B}(X, N)_\bZ$ be the GIT quotient. Note that $\pi $ is a surjective morphism between affine schemes of finite type defined over $\bZ$. We call $M_\mathrm{B}(X, N)_\bZ$  the  \emph{character  variety}  of $\pi_1(X)$ into ${\rm GL}_N$. 

Now assume that $A$ is a field $\K$ of characteristic zero.  Consider $M_\mathrm{B}(X, N):=M_\mathrm{B}(X, N)_\bZ\times_{\spec\, \bZ}\spec\, \Q$, which is an affine $\Q$-scheme of finite type.  For $\varrho\in R_\mathrm{B}(X, N)_{\bZ}(\K)$, we shall denote its image in $M_\mathrm{B}(X, N)(\K)$ by $[\varrho]:=\pi(\varrho)$. Notice that for an algebraically closed field $\K$ of characteristic zero, the points of $M_\mathrm{B}(X, N)(\K)$ parametrize conjugation classes of semisimple representations $\pi_1(X)\to \mathrm{GL}_N(\K)$. For any $\varrho\in R_\mathrm{B}(X, N)(\K)$, we denote its semi-simplification as $\varrho^{ss}$. Then two linear representations $\varrho_1,\varrho_2: \pi_1(X)\to \mathrm{GL}_N(\bar{\K})$ satisfies $[\varrho_1]=[\varrho_2]$ if and only if $\varrho_1^{ss}\sim\varrho_2^{ss}$, where $\sim$ means that they differ by a conjugation by an element in $\mathrm{GL}_N(\K)$.


The case of character variety in the positive characteristic is similar. Let $p $ be a prime number and write   $R_{\bF_p}:=R_{\rm B}(X,N)_{\bZ}  \times_{\spec\, \bZ}\spec\, \bF_p$. Note that the general linear group $\GL_{N,\bF_p}:=\GL_{N}\times_{\spec\, \bZ}\spec\, \bF_p$ over $\bF_p$ acts on $R_{\bF_p}$ by conjugation.  
Let $\mxp$ be the GIT quotient of $R_{\bF_p}$ by $\GL_{N,\bF_p}$. Then $M_{\rm B}(X,N)_{\bF_p}$  is an affine $\bF_p$-scheme of ﬁnite type.  For any algebraically closed field $\K$ of characteristic $p$, the $\K$-points  $M_{\rm B}(X,N)_{\bF_p}(\K)$ are identified with the conjugacy  classes of semisimple representations $\pi_1(X)\to \GL_N(\K)$. 

\subsection{Bounded representations}
We first recall the following definition of bounded subsets of affine schemes over a non-archimedean local field (see   \cite[Definitions 3.3]{DY23}).

\begin{definition}	
Let $\K$ be a non-archimedean local field.
 	\begin{thmlist}
 		 \item  Let $X$ be an affine $\K$-scheme of finite type. A subset $B\subset X(\K)$ is \textit{bounded} if for every $f\in \K[X]$, the set $\{\nu(f(b))\mid b\in B\}$ is bounded below, where $\nu:\K\to\R$ is the non-archimedean valuation of $\K$.  
 		 \item Let $\Gamma$ be a finitely generated group. A representation $\varrho:\Gamma\to {\rm GL}_N(\K)$ is \textit{bounded} if its image  $\varrho(\Gamma)$ is bounded. 
 	\end{thmlist} 
\end{definition}

Bounded subsets and compact subsets are closely related as follows (see   \cite[Fact 2.2.3]{KaPra23}).

\begin{lemma} \label{lem:bounded_and_compact}
Let $\K$ be a non-archimedean local field and $X$ be an affine $\K$-scheme of finite type. A closed subset $B$ is bounded if and only if $B$ is compact with respect to the analytic topology of $X(\K)$. If $f: X \to Y$ is a morphism of affine $\K$-schemes of finite type, then $f$ carries bounded subsets of $X(\K)$ to bounded subsets in $Y(\K)$.    \qed
\end{lemma}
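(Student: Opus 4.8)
The plan is to reduce the entire statement to the classical fact that a non-archimedean local field $\K$ is locally compact, with compact valuation ring $\mathcal{O}_\K=\{t\in\K:\nu(t)\ge 0\}$. First I would fix a surjection $\K[y_1,\dots,y_n]\twoheadrightarrow\K[X]$, which realises $X$ as a closed subscheme of $\A^n_\K$ and hence $X(\K)$ as a closed subspace of $\K^n$ for the analytic topologies; write $x_i\in\K[X]$ for the image of $y_i$. A short computation with the ultrametric inequality shows that a subset $B\subset X(\K)$ is bounded precisely when the coordinate functions are jointly bounded below in valuation on $B$, i.e. when there is a constant $C$ with $\nu(x_i(b))\ge -C$ for all $b\in B$ and all $i$: every $f\in\K[X]$ is a polynomial in the $x_i$, $\nu$ of a sum is at least the minimum of the $\nu$'s of the summands, and $\nu$ of a monomial is an affine-linear function of the $\nu(x_i(b))$ with fixed non-negative coefficients. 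Thus $B$ is bounded if and only if it is contained in the product $P=\prod_{i=1}^n\{t\in\K:\nu(t)\ge -C\}$ of closed balls.

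With this reduction the equivalence is immediate. If $B$ is bounded then $B\subset P$; closed balls in $\K$ are compact because $\K$ is a non-archimedean local field (its valuation ring $\mathcal{O}_\K$ is compact, being profinite), so the finite product $P$ is compact, and $B$—being closed in $X(\K)$ and hence in $\K^n$ and in $P$—is compact. Conversely, if $B$ is compact then for each $f\in\K[X]$ the image $f(B)$ is a compact subset of $\K$, since the restriction of $f$ to $X(\K)\subset\K^n$ is continuous for the valuation topology; writing $\K=\bigcup_{m\ge 1}\{t:\nu(t)\ge -m\}$ as an increasing union of (cl)open sets, compactness forces $f(B)\subset\{t:\nu(t)\ge -m\}$ for some $m$, so $\{\nu(f(b)):b\in B\}$ is bounded below. (This direction does not use closedness of $B$, which is consistent with the fact that compact subsets of the Hausdorff space $X(\K)$ are automatically closed.) For the last assertion, if $f:X\to Y$ induces $f^{\#}:\K[Y]\to\K[X]$, then for $g\in\K[Y]$ and $b\in B$ one has $g(f(b))=(f^{\#}g)(b)$, so $\{\nu(g(y)):y\in f(B)\}\subseteq\{\nu((f^{\#}g)(b)):b\in B\}$ is bounded below as soon as $B$ is; hence $f(B)$ is bounded.

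I do not expect a real obstacle: the only non-formal ingredient is the local compactness of $\K$, and the remaining steps are bookkeeping with the ultrametric inequality and the definition of the analytic topology. The single point worth stating carefully is the reduction of boundedness to the finitely many coordinate functions $x_i$, since it is exactly what turns the definition into the concrete condition "contained in a product of balls" and thereby makes the implication from boundedness to compactness work.
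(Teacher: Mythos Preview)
Your argument is correct. The reduction to finitely many coordinate functions via the ultrametric inequality is the right move, and the three parts (bounded $\Rightarrow$ contained in a product of compact balls $\Rightarrow$ compact; compact $\Rightarrow$ each $f(B)$ compact in $\K$ $\Rightarrow$ bounded below; functoriality via $g(f(b))=(f^{\#}g)(b)$) all go through as you describe.

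As for comparison with the paper: there is nothing to compare against. The paper does not prove this lemma; it records it with a terminal \textsf{qed} and attributes it to \cite[Fact~2.2.3]{KaPra23}. Your write-up is therefore a self-contained elementary proof of a result the authors take as known, and it would serve perfectly well if one wanted to avoid the external reference.
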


The following property of bounded representations is essential for our proof.
\begin{lemma}[\protecting{\cite[Lemma 3.7]{DY23}}]\label{lem:DY23bounded}
	Let $\K$ be a non-archimedean local field. Let $x\in M_{\rm B}(X,N)(\K)$.  If $\{\varrho_i:\pi_1(X)\to {\rm GL}_N(\bar{\K})\}_{i=1,2}$ are two linear  representations such that $[\varrho_1]=[\varrho_2]=x\in M_{\rm B}(X, N)(\bar{\K})$, then $\varrho_1$ is bounded if and only if $\varrho_2$ is bounded. \qed
\end{lemma}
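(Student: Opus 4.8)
The plan is to reduce the statement to two elementary facts about a finitely generated group $\Gamma=\pi_1(X)$ and a non-archimedean local field: that boundedness of a representation $\Gamma\to\GL_N$ is invariant under conjugation by a fixed element, and that a representation is bounded if and only if its semisimplification is. First, by the description of the character variety recalled in \cref{Betti_moduli_space}, the equality $[\varrho_1]=[\varrho_2]$ in $M_{\rm B}(X,N)(\bar{\K})$ forces $\varrho_1^{ss}\sim\varrho_2^{ss}$, i.e.\ these semisimple representations are conjugate by some $g\in\GL_N(\bar{\K})$. Since $\Gamma$ is finitely generated, the images of $\varrho_1,\varrho_2,\varrho_1^{ss},\varrho_2^{ss}$ and the matrix $g$ all lie in $\GL_N(L)$ for a single finite extension $L/\K$; replacing $\K$ by $L$ — which is again a non-archimedean local field whose valuation is, up to normalization, the one of $\K$, so that boundedness is unaffected — we may assume $\varrho_1,\varrho_2\colon\Gamma\to\GL_N(\K)$ and $g\in\GL_N(\K)$.

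Next I would record that conjugation by a fixed $g\in\GL_N(\K)$ preserves boundedness: if $B\subset\GL_N(\K)$ is bounded and $f\in\K[\GL_N]$, then $x\mapsto f(gxg^{-1})$ is again a regular function, so $\{\nu(f(gbg^{-1}))\mid b\in B\}$ is bounded below, whence $gBg^{-1}$ is bounded. In particular $\varrho_1^{ss}$ is bounded if and only if $\varrho_2^{ss}$ is. It therefore remains to show that an arbitrary $\varrho\colon\Gamma\to\GL_N(\K)$ is bounded if and only if $\varrho^{ss}$ is; granting this, $\varrho_1$ bounded $\Leftrightarrow\varrho_1^{ss}$ bounded $\Leftrightarrow\varrho_2^{ss}$ bounded $\Leftrightarrow\varrho_2$ bounded, which is the lemma. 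The convenient reformulation is that $\varrho$ is bounded if and only if $\varrho(\Gamma)$ preserves an $\mathcal{O}_\K$-lattice $\Lambda\subset\K^N$: indeed $\varrho(\Gamma)$ is a subgroup whose closure is a bounded, hence (by \cref{lem:bounded_and_compact}) compact, subgroup, the orbit of $\mathcal{O}_\K^N$ under this compact group is finite (the stabilizer of $\mathcal{O}_\K^N$ is the open, hence finite-index, subgroup $\GL_N(\mathcal{O}_\K)$), and the sum of that finite orbit is an invariant lattice; conversely the stabilizer of any lattice is conjugate to the bounded group $\GL_N(\mathcal{O}_\K)$.

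For the equivalence, if $\varrho$ preserves a lattice $\Lambda$ and $0=V_0\subset\cdots\subset V_k=\K^N$ is a composition series, then $\Lambda_i:=\Lambda\cap V_i$ is an invariant lattice in $V_i$ with $\Lambda_{i-1}$ saturated in $\Lambda_i$, so each $\Lambda_i/\Lambda_{i-1}$ is an invariant lattice in $V_i/V_{i-1}$ and $\bigoplus_i\Lambda_i/\Lambda_{i-1}$ is an invariant lattice for $\varrho^{ss}$, which is thus bounded. Conversely, assuming $\varrho^{ss}$ bounded I argue by induction on $k$, the case $k=1$ being trivial. Write $\varrho=\left(\begin{smallmatrix}\varrho'&c\\0&\varrho''\end{smallmatrix}\right)$ relative to $0\to V_1\to\K^N\to\K^N/V_1\to0$ with $V_1$ irreducible; then $\varrho''$ has semisimplification a direct summand of $\varrho^{ss}$, hence bounded, hence (induction) $\varrho''$ is bounded, and we fix invariant lattices $\Lambda'\subset V_1$, $\Lambda''\subset\K^N/V_1$. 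The cocycle relation $c(gh)=\varrho'(g)c(h)+c(g)\varrho''(h)$, together with $c(g^{-1})=-\varrho'(g)^{-1}c(g)\varrho''(g)^{-1}$, the ultrametric inequality, and the fact that $\varrho',\varrho''$ preserve $\Lambda',\Lambda''$, shows that the entries of $c(g)$ have valuation bounded below uniformly in $g$, because $\Gamma$ is finitely generated. Conjugating $\varrho$ by $\mathrm{diag}(\varpi^{m}I,I)$ for a uniformizer $\varpi$ replaces $c$ by $\varpi^{m}c$, and for $m$ large enough $\varpi^{m}c$ takes values in $\mathrm{Hom}(\Lambda'',\Lambda')$, so the conjugated representation preserves $\Lambda'\oplus\Lambda''$ and is bounded; hence $\varrho$ is bounded.

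The main obstacle is this last converse — deducing boundedness of $\varrho$ from that of $\varrho^{ss}$; the rest is a formal unwinding of the definitions. That step is where the two structural inputs genuinely enter: finite generation of $\pi_1(X)$, to make the extension cocycle uniformly bounded, and the non-archimedean nature of $\K$, to absorb that bounded cocycle into an integral lattice by a single diagonal conjugation.
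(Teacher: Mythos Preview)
The paper does not supply its own proof of this lemma; it is quoted from \cite[Lemma~3.7]{DY23} and closed with a \qed, so there is nothing in the present text to compare your argument against.

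That said, your proof is correct and is the standard route to this statement. The reduction via $[\varrho_1]=[\varrho_2]\Rightarrow\varrho_1^{ss}\sim\varrho_2^{ss}$ and conjugation-invariance of boundedness to the single claim ``$\varrho$ bounded $\Leftrightarrow\varrho^{ss}$ bounded'' is exactly right, as is the descent to a common finite extension $L/\K$ using finite generation of $\pi_1(X)$. The equivalence between boundedness and the existence of an invariant $\mathcal{O}_\K$-lattice is standard (your compactness-plus-open-stabilizer argument is clean), and the two directions of the key claim are handled correctly: passing to subquotients of an invariant lattice for $\varrho\Rightarrow\varrho^{ss}$, and for the converse, bounding the extension cocycle $c$ uniformly via the ultrametric inequality and finite generation, then absorbing it by a single conjugation $\mathrm{diag}(\varpi^m I,I)$. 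You correctly identify this converse as the one place where both structural inputs (finite generation of $\Gamma$ and the non-archimedean valuation) are genuinely used.
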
 

In the case of characteristic zero, we need the following lemma due to Yamanoi.  
\begin{lemma}[\protecting{\cite[Lemma 4.2]{Yam10}}]\label{lem:Yamanoi}
Let $\K$ be a non-archimedean local field of characteristic zero. Let $R_0\subset R_{\mathrm{B}}(X, N)(\K)$ be the subset whose points are bounded representations. Let $M_0\subset M_{\mathrm{B}}(X, N)(\K)$ be the image of $R_0$ under the natural projection. Then $M_0$ is compact with respect to the analytic topology. \qed
\end{lemma}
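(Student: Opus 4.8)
The plan is to exhibit $M_0$ as the continuous image of a compact set. Write $\mathcal{O}_{\K}\subset\K$ for the valuation ring and fix a uniformizer $\varpi$; since $\K$ is a non-archimedean local field, $\mathcal{O}_{\K}$ is compact. Let $\pi\colon R_{\mathrm{B}}(X,N)_{\bZ}\to M_{\mathrm{B}}(X,N)_{\bZ}$ be the GIT quotient, and regard $\mathrm{Hom}(\pi_1(X),\GL_N(\mathcal{O}_{\K}))=R_{\mathrm{B}}(X,N)_{\bZ}(\mathcal{O}_{\K})$ as a subset of $R_{\mathrm{B}}(X,N)(\K)$ via $\mathcal{O}_{\K}\hookrightarrow\K$ (for $\mathrm{char}\,\K=0$ one has $\bZ\subset\mathcal{O}_{\K}$, so this is the stated setting; the argument is uniform). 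I claim
\[
M_0=\pi\bigl(\mathrm{Hom}(\pi_1(X),\GL_N(\mathcal{O}_{\K}))\bigr).
\]
Granting this, $\pi$ is a morphism of finite-type $\bZ$-schemes and hence induces a continuous map on $\K$-points for the analytic topology, so it suffices to prove that $\mathrm{Hom}(\pi_1(X),\GL_N(\mathcal{O}_{\K}))$ is compact.

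For the claimed equality, the inclusion $\supseteq$ is immediate, since a representation valued in $\GL_N(\mathcal{O}_{\K})$ has bounded image. For $\subseteq$, let $\varrho\colon\pi_1(X)\to\GL_N(\K)$ be bounded. Regarding matrix entries as regular functions on $\GL_N$, boundedness gives an integer $m$ with all entries of all $\varrho(\gamma)$, $\gamma\in\pi_1(X)$, lying in $\varpi^{-m}\mathcal{O}_{\K}$. Then the $\mathcal{O}_{\K}$-module $L:=\sum_{\gamma}\varrho(\gamma)\,\mathcal{O}_{\K}^N$ satisfies $\mathcal{O}_{\K}^N\subseteq L\subseteq\varpi^{-m}\mathcal{O}_{\K}^N$; since $\mathcal{O}_{\K}$ is a discrete valuation ring, $L$ is a free $\mathcal{O}_{\K}$-lattice of rank $N$, and it is $\varrho(\pi_1(X))$-stable because $\varrho(\delta)L=\sum_{\gamma}\varrho(\delta\gamma)\,\mathcal{O}_{\K}^N=L$. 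Choosing an $\mathcal{O}_{\K}$-basis of $L$ yields $g\in\GL_N(\K)$ with $g\varrho g^{-1}$ valued in $\GL_N(\mathcal{O}_{\K})$. As $\pi$ is constant on $\GL_N$-conjugacy orbits, $\pi(\varrho)=\pi(g\varrho g^{-1})$ lies in the right-hand side. (One may also invoke \cref{lem:DY23bounded} to note that boundedness depends only on the conjugacy class, so that $M_0$ is well posed as a subset of $M_{\mathrm{B}}(X,N)(\K)$, and the above applies equally to $\varrho^{ss}$, for which $[\varrho^{ss}]=[\varrho]$.) This proves the claim.

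It remains to show that $\mathrm{Hom}(\pi_1(X),\GL_N(\mathcal{O}_{\K}))=R_{\mathrm{B}}(X,N)_{\bZ}(\mathcal{O}_{\K})$ is compact. Since $R_{\mathrm{B}}(X,N)_{\bZ}$ is of finite type over $\bZ$, fix a closed immersion $R_{\mathrm{B}}(X,N)_{\bZ}\hookrightarrow\mathbb{A}^M_{\bZ}$ coming from a finite set of $\bZ$-algebra generators of its coordinate ring; it is then cut out by finitely many polynomials $f_1,\dots,f_r\in\bZ[x_1,\dots,x_M]$. Hence $R_{\mathrm{B}}(X,N)_{\bZ}(\mathcal{O}_{\K})=\{a\in\mathcal{O}_{\K}^M:f_1(a)=\dots=f_r(a)=0\}$ is a closed subset of the compact polydisc $\mathcal{O}_{\K}^M$, thus compact. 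Its image $M_0=\pi\bigl(R_{\mathrm{B}}(X,N)_{\bZ}(\mathcal{O}_{\K})\bigr)$ under the continuous map $\pi$ is therefore compact, as desired.

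As for where the content lies: there is no serious obstacle. The only non-formal ingredient is the elementary lattice-stabilization argument showing that every bounded representation is $\GL_N(\K)$-conjugate into $\GL_N(\mathcal{O}_{\K})$; the remainder is the identification of $\mathcal{O}_{\K}$-points of the finite-type $\bZ$-scheme $R_{\mathrm{B}}(X,N)_{\bZ}$ with $\mathcal{O}_{\K}$-valued representations, together with compactness of $\mathcal{O}_{\K}$. The one point demanding a little care is that $M_0$ is defined as the image of the set $R_0$ of \emph{all} bounded representations, which is not closed in $R_{\mathrm{B}}(X,N)(\K)$; this is circumvented by rewriting $\pi(R_0)=\pi\bigl(R_{\mathrm{B}}(X,N)_{\bZ}(\mathcal{O}_{\K})\bigr)$ using conjugation-invariance of $\pi$, rather than attempting to show $R_0$ itself is compact.
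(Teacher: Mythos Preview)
Your proof is correct. Note, however, that the paper does not supply its own proof of this lemma: it is quoted from \cite[Lemma 4.2]{Yam10} and marked with a \qed, so there is nothing in the paper to compare against directly. Your argument---conjugating any bounded representation into $\GL_N(\mathcal{O}_{\K})$ via a stable lattice, then using compactness of $\mathcal{O}_{\K}^M$ to see that $R_{\mathrm{B}}(X,N)_{\bZ}(\mathcal{O}_{\K})$ is compact, and pushing forward by the continuous map $\pi$---is the standard route and is essentially what Yamanoi does. The only remark worth making is your final paragraph's observation: $R_0$ itself is not compact (it is a union of $\GL_N(\K)$-conjugates of $R_{\mathrm{B}}(X,N)_{\bZ}(\mathcal{O}_{\K})$, hence not bounded in $R_{\mathrm{B}}(X,N)(\K)$), so the rewriting $\pi(R_0)=\pi\bigl(R_{\mathrm{B}}(X,N)_{\bZ}(\mathcal{O}_{\K})\bigr)$ is genuinely needed and is the heart of the argument.
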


\subsection{A reduction theorem}
Based on the previous work \cite{BDDM} on the extension of Gromov-Schoen theory \cite{GS92} to quasi-projective varieties,   Cadorel, Yamanoi, and the first author \cite{CDY22} established the following theorem for the representation of fundamental groups of quasi-projective varieties into algebraic groups defined over non-archimedean local fields. 
\begin{theorem}[\protecting{\cite[Theorem 0.10]{CDY22}}] \label{thm:KE}
Let $X$ be a complex smooth quasi-projective variety, and let $\varrho:\pi_1(X)\to {\rm GL}_N(\K)$ be a reductive representation where $\K$ is a non-archimedean local field.  Then 
	 for any connected Zariski closed  subset $T$ of $X$, the following properties are equivalent:
	 \begin{enumerate}[label={\rm (\alph*)}]
		\item \label{item bounded} the image $\varrho({\rm Im}[\pi_1(T)\to \pi_1(X)])$ is a bounded subgroup of $\mathrm{GL}_N(\K)$.
		\item \label{item normalization} For every irreducible component $T_o$ of $T$, the image $\varrho({\rm Im}[\pi_1(T_o^{\rm norm})\to \pi_1(X)])$ is a bounded subgroup of $\mathrm{GL}_N(\K)$. Here $T_o^{\rm norm}$ means the normalization of $T_o$.\qed
	\end{enumerate} 
\end{theorem}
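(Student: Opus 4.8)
The implication \ref{item bounded} $\Rightarrow$ \ref{item normalization} is the soft direction, and I would dispatch it first. The factorization $T_o^{\rm norm}\to T_o\hookrightarrow T\hookrightarrow X$ shows that $\mathrm{Im}[\pi_1(T_o^{\rm norm})\to\pi_1(X)]$ is a subgroup of $\mathrm{Im}[\pi_1(T)\to\pi_1(X)]$, and a subgroup of a bounded subgroup of $\mathrm{GL}_N(\K)$ is again bounded (directly from the definition, or via \cref{lem:bounded_and_compact}). Hence \ref{item bounded} yields \ref{item normalization} for every component at once, and the entire content of the theorem is the converse, on which I would concentrate.

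For \ref{item normalization} $\Rightarrow$ \ref{item bounded} the plan is to convert boundedness into a fixed-point statement on the (extended) Bruhat--Tits building $\Delta$ of $\mathrm{GL}_N(\K)$ and then to propagate fixed points across $T$ by means of an equivariant harmonic map. First I would recall that $\Delta$ is a complete $\mathrm{CAT}(0)$ space on which $\mathrm{GL}_N(\K)$ acts by isometries, and that a subgroup is bounded if and only if it has a bounded orbit, if and only if — taking the circumcenter of that orbit — it fixes a point of $\Delta$. Since boundedness is insensitive to semisimplification by \cref{lem:DY23bounded}, I may replace any restriction of $\varrho$ by its reductive semisimplification without affecting the conclusion. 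Thus \ref{item normalization} says precisely that each $\varrho\big(\mathrm{Im}[\pi_1(T_o^{\rm norm})\to\pi_1(X)]\big)$ fixes some point $\xi_o\in\Delta$, and my goal becomes the production of a single point of $\Delta$ fixed by all of $\varrho\big(\mathrm{Im}[\pi_1(T)\to\pi_1(X)]\big)$.

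The geometric input is a $\varrho$-equivariant pluriharmonic map $u\colon\widetilde X\to\Delta$ from the universal cover, furnished by the Gromov--Schoen theory \cite{GS92} in the quasi-projective setting of \cite{BDDM,CDY22}; crucially this map is tame (of controlled growth near the boundary of a smooth compactification). The key step is to show, for each irreducible component $T_o$, that $u$ is \emph{constant} along the lift of $T_o^{\rm norm}$. A pluriharmonic map restricts to a pluriharmonic, hence harmonic, map on the smooth locus of the complex subvariety $T_o$ and lifts to its normalization, so I would study $\phi:=d_\Delta(u(\cdot),\xi_o)^2$ on the relevant cover of $T_o^{\rm norm}$. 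Because $\xi_o$ is fixed and $u$ is equivariant, $\phi$ is invariant under the deck action and descends to $T_o^{\rm norm}$; as the squared distance to a point is convex on a $\mathrm{CAT}(0)$ space, the Gromov--Schoen composition formula makes $\phi$ subharmonic. Invoking the tameness of $u$ together with a maximum principle on the quasi-projective $T_o^{\rm norm}$ forces $\phi$ to be constant, and the strict convexity of the distance then forces $u\equiv\xi_o$ on the lift of $T_o^{\rm norm}$.

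Finally I would glue. The map $u$ is continuous on $\widetilde X$ and, by the previous step, constant on the lift of each component of the closed set $T=\bigcup_o T_o$. Fix a connected component $\widetilde T^{\circ}$ of the preimage of $T$ in $\widetilde X$: it is the cover of $T$ whose deck group is $\Gamma_T:=\mathrm{Im}[\pi_1(T)\to\pi_1(X)]$, and since $T$ is connected its components chain together through their mutual intersections, so continuity forces the per-component constants to agree and $u$ to take a single value $\xi\in\Delta$ on all of $\widetilde T^{\circ}$. Equivariance, $u(\gamma\cdot\tilde t)=\varrho(\gamma)\,u(\tilde t)$ for $\gamma\in\Gamma_T$, then gives $\varrho(\gamma)\xi=\xi$, which is exactly \ref{item bounded}. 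I expect the main obstacle to be the constancy assertion of the third paragraph: over a noncompact $T_o^{\rm norm}$ subharmonicity alone does not force a function to be constant, and one must genuinely exploit the finite-energy/tameness properties of the harmonic map from \cite{BDDM} to apply the maximum principle — this is also where the ``vanishing-cycle'' loops created by non-normality are controlled, since the two branches over a non-normal point of $T_o$ are sent by $u$ to the same value, so the constancy of $u$ is precisely what bounds those loops. The multi-component gluing, by contrast, is a purely topological consequence of connectedness once constancy along each component is established.
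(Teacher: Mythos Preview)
The paper does not give its own proof of this statement: \cref{thm:KE} is quoted verbatim from \cite[Theorem~0.10]{CDY22} and closed with a bare \textsc{qed}, so there is nothing in the present paper to compare your argument against. The surrounding text only indicates the provenance of the result, namely the extension of Gromov--Schoen theory \cite{GS92} to the quasi-projective setting carried out in \cite{BDDM}, on which \cite{CDY22} builds.

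That said, your sketch is headed in the right direction and is consistent with what the paper says about the origin of the theorem: translate boundedness into a fixed point on the Bruhat--Tits building, invoke the $\varrho$-equivariant pluriharmonic map of \cite{BDDM,CDY22}, use convexity of the squared distance and a maximum-principle/tameness argument to force constancy on each lifted component, and glue via connectedness. You correctly flag the genuine analytic difficulty, namely that on a noncompact $T_o^{\rm norm}$ the subharmonicity of $\phi$ is not by itself enough and one must use the finite-energy/logarithmic-growth control from \cite{BDDM}. One point to be careful about in the gluing: the preimage of $T_o$ in $\widetilde X$ is not literally a lift of $T_o^{\rm norm}$, so the constancy argument is really run on the (possibly singular) irreducible components of the preimage of $T$ in $\widetilde X$, with the normalization entering only to justify the pluriharmonic restriction; once constancy on each such piece is known, the chaining through intersections goes as you describe.
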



\section{Proof of \texorpdfstring{\cref{main}}{Theorem A}}\label{sec:A}
\subsection{Properties of    character varieties in characteristic zero}
  Let $f: Y\to X$ be a morphism between connected smooth complex quasi-projective varieties and $Z$ be the Zariski closure of the image of $f$ in $X$. Write $\iota: Z\to X$ as the natural inclusion. There are natural morphisms between character varieties and representation schemes of $X, Y$, and $Z$ induced by $f$ and $\iota$:
  \begin{equation}
  	\begin{tikzcd} \label{main_diagram}
  		R_\mathrm{B}(X,N) \arrow{r}{\pi_X} \arrow{d}{\iota^*}\arrow[swap,bend right=60]{dd}{f^*} & M_\mathrm{B}(X,N) \arrow{d}{j_Z} \arrow[bend left=60]{dd}{j_Y} & M_Y \arrow[hook]{l}\arrow{dd}{j_Y}\\
  		R_\mathrm{B}(Z,N) \arrow{r}{\pi}\arrow{d}  & M_\mathrm{B}(Z,N) \arrow{d}{}& \\
  		R_\mathrm{B}(Y,N) \arrow{r}{\pi_Y} & M_\mathrm{B}(Y,N) & \{[1]\}\arrow[hook]{l}\\
  	\end{tikzcd} 
  \end{equation}
Here we define
\begin{equation} \label{working_subscheme}
M_Y:=j_Y^{-1}([1])
\end{equation}
where $[1]$ stands for the trivial class in $M_{\rm B}(Y,N)$.  Then $M_Y$ is a closed subscheme of $M_{\rm B}(X, N)$.

The following result is a consequence of the properties of harmonic maps to symmetric spaces or to Euclidean buildings. For this purpose, the representation being reductive is enough.

\begin{proposition} \label{prop:compact&bounded}
Let $f: Y\to X$ be a  morphism between smooth quasi-projective varieties and $Z$ be the Zariski closure of $f(Y)$.
\begin{enumerate}[label=\rm (\alph*)]
\item  \label{item:unitary}	If $\varrho: \pi_1(X)\to \mathrm{GL}_N(\C)$ is a semisimple representation such that $[\varrho]\in{M_Y}(\C)$, then $\varrho(\mathrm{Im}[\pi_1(Z)\to \pi_1(X)])\subset g{\rm U}_N(\C)g^{-1}$, where ${\rm U}_N(\C)$ is the unitary group of degree $N$, and $g\in {\rm GL}_N(\C)$. 
\item  \label{item:bounded}	If $\varrho: \pi_1(X)\to \mathrm{GL}_N( \K)$ is a semisimple representation  such that $[\varrho]\in{M_Y}(\K)$ where $\K$ is a non-archimedean local field, then $\varrho(\mathrm{Im}[\pi_1(Z)\to \pi_1(X)])$ is bounded. 
\end{enumerate}
\end{proposition}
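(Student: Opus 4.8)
The plan is to reduce both statements, via \cref{thm:KE} and its counterpart for harmonic maps into symmetric spaces, to a boundedness statement for the normalization of $Z$, after noting that $[\varrho]\in M_Y$ means exactly that the semisimplification of $f^*\varrho=\varrho\circ f_*$ is trivial, i.e.\ that $f^*\varrho$ has image conjugate into the group of upper triangular unipotent matrices. For the reduction: since $Y$ is smooth, hence normal, and $f(Y)$ is Zariski dense in $Z$, the morphism $f$ factors through the normalization $\nu: Z^{\mathrm{norm}}\to Z$, say $f=\iota\circ\nu\circ\tilde f$ with $\tilde f: Y\to Z^{\mathrm{norm}}$ dominant; since a dominant morphism of irreducible complex varieties has finite-index image on fundamental groups, $\mathrm{Im}[\pi_1(Y)\to\pi_1(Z^{\mathrm{norm}})]$ is of finite index in $\pi_1(Z^{\mathrm{norm}})$, and pushing forward to $\pi_1(X)$ shows that $f^*\varrho(\pi_1(Y))=\varrho(\mathrm{Im}[\pi_1(Y)\to\pi_1(X)])$ is of finite index in $\varrho(\mathrm{Im}[\pi_1(Z^{\mathrm{norm}})\to\pi_1(X)])$. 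Applying \cref{thm:KE} — or, when $\K=\C$, its analogue for harmonic maps into symmetric spaces — to the irreducible Zariski closed subset $Z\subset X$ and the reductive representation $\varrho$ reduces the desired boundedness of $\varrho(\mathrm{Im}[\pi_1(Z)\to\pi_1(X)])$ to that of $\varrho(\mathrm{Im}[\pi_1(Z^{\mathrm{norm}})\to\pi_1(X)])$; since a finite-index overgroup of a bounded (resp.\ relatively compact) subgroup is again such, it then suffices to prove that $f^*\varrho(\pi_1(Y))$ is bounded (resp.\ relatively compact).

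For \ref{item:bounded}: being finitely generated and conjugate into the upper triangular unipotent subgroup of $\mathrm{GL}_N(\K)$, the group $f^*\varrho(\pi_1(Y))$ has all of its matrix entries in $\varpi^{-M}\mathcal{O}_{\K}$ for a uniform $M$ (products and inverses of unipotent matrices are polynomial in the entries, with no determinantal denominators), hence is bounded, and we are done in this case.

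For \ref{item:unitary} the same skeleton applies with $\mathrm{GL}_N(\C)/\mathrm{U}_N(\C)$ in place of the Euclidean building, but a nontrivial unipotent subgroup of $\mathrm{GL}_N(\C)$ is not relatively compact, so one must first improve ``trivial semisimplification'' to ``trivial''. Here I would invoke non-abelian Hodge theory: the semisimple $\varrho$ equips its flat bundle on $X$ with a tame harmonic metric, and the pull-back of this metric along $f$ is a tame harmonic metric on the flat bundle attached to $f^*\varrho$ on $Y$ (pluriharmonicity is preserved under holomorphic pull-back and tameness under algebraic pull-back); since the monodromy of a tame harmonic bundle on a smooth quasi-projective variety is semisimple, $f^*\varrho$ is semisimple, hence — being unipotent — trivial. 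Then $\varrho(\mathrm{Im}[\pi_1(Z^{\mathrm{norm}})\to\pi_1(X)])$ is finite, in particular relatively compact, and the symmetric-space form of \cref{thm:KE} yields that $\varrho(\mathrm{Im}[\pi_1(Z)\to\pi_1(X)])$ is relatively compact, that is, contained in $g\,\mathrm{U}_N(\C)\,g^{-1}$ for some $g\in\mathrm{GL}_N(\C)$.

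The one substantial step is this archimedean implication ``$f^*\varrho$ unipotent $\Longrightarrow$ $f^*\varrho$ trivial'', equivalently that the pull-back of a semisimple representation along a morphism of smooth quasi-projective varieties remains semisimple. I expect the real work there to be in verifying that the harmonic metric for $\varrho$ pulls back to a genuinely \emph{tame} harmonic metric near the boundary divisor of $Y$ (controlling the loci where $f$ does and does not meet the boundary of $X$) and in quoting the precise form of the correspondence between semisimple representations and tame harmonic bundles over quasi-projective bases; everything else is formal, and in particular the non-archimedean half needs only \cref{thm:KE} together with the elementary boundedness of finitely generated unipotent subgroups of $\mathrm{GL}_N(\K)$.
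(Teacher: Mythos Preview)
Your outline is correct, and for part~(b) it is essentially the paper's argument with the implicit reduction steps made explicit. One simplification: rather than proving by hand that a finitely generated unipotent subgroup of $\mathrm{GL}_N(\K)$ is bounded, you can just cite \cref{lem:DY23bounded}: since $[f^*\varrho]=[1]$ and the trivial representation is bounded, $f^*\varrho$ is bounded. (The paper writes ``$f^*\varrho=1$'' here, which is a slight over-claim in the non-archimedean setting, but only boundedness is used.) The passage from boundedness on $Y$ to boundedness on $Z^{\mathrm{norm}}$ via the finite-index image, and then to $Z$ via \cref{thm:KE}, is exactly what the paper has in mind.

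For part~(a) the paper takes a different and more self-contained route. It does \emph{not} invoke an archimedean analogue of \cref{thm:KE}; instead, after reducing to $f$ proper, it works directly with the $\varrho$-equivariant harmonic map $u:\widetilde{X}_\varrho\to \mathrm{GL}_N(\C)/\mathrm{U}_N(\C)$ supplied by Mochizuki. The triviality of $f^*\varrho$ (which, as you correctly identify, requires knowing that the pull-back of a tame pure-imaginary harmonic bundle is again such, so that $f^*\varrho$ is semisimple) allows one to lift $f$ to $\tilde f:Y\to\widetilde{X}_\varrho$, and unicity of the harmonic metric forces $u\circ\tilde f$ to be constant. Properness then makes $\tilde f(Y)$ a full connected component of $\pi_\varrho^{-1}(Z)$, so its stabilizer $\varrho(\mathrm{Im}[\pi_1(Z)\to\pi_1(X)])$ fixes a point of the symmetric space, i.e.\ lies in a conjugate of $\mathrm{U}_N(\C)$. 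Your route through an ``archimedean \cref{thm:KE}'' would also work, but that statement is not available in the paper and would itself be proved by precisely this harmonic-map argument (recall that $\pi_1(Z^{\mathrm{norm}})\to\pi_1(Z)$ need not have finite-index image---think of a nodal curve---so the implication from $Z^{\mathrm{norm}}$ to $Z$ is genuinely nontrivial). The paper's approach buys self-containment; yours cleanly separates the geometric input from the reduction mechanism.
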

  \begin{proof}
 We might assume that $f$ is proper. \\
\noindent {\it Proof of \Cref{item:unitary}.} Write $S:=\mathrm{GL}_N(\C)/{\rm U}_N(\C)$.  Let $\pi_\varrho:\widetilde{X}_\varrho\to X$ be the Galois covering corresponding to $\ker \varrho$.  By a theorem of Mochizuki \cite{Moc07b}, there exists a $\varrho$-equivariant harmonic mapping $u:\widetilde{X}_\varrho\to S$ corresponding to a tame pure imaginary harmonic bundle $(E,\theta,h)$.  By \cref{working_subscheme},  we have $f^*\varrho=1$, and thus there exists a lift  $\widetilde{f}: Y\to\widetilde{X}_\varrho$  of $f$.  Then $u\circ\widetilde{f}:Y\to S$ is a $f^*\varrho$-equivariant harmonic mapping corresponding to a tame pure imaginary bundle $(f^*E,f^*\theta,f^*h)$. Since $f^*\varrho=1$, by the unicity theorem of harmonic maps in \cite{Moc07b}, it follows that $(f^*E,f^*\theta,f^*h)=( \oplus^N\mathcal{O}_Y,0,h_0)$, where $h_0$ is the canonical metric for $\oplus^N\mathcal{O}_Y$ whose curvature is zero. Therefore, $u\circ\widetilde{f}$ is a constant map.

\[
\begin{tikzcd}
&\widetilde{X}_{\varrho}\arrow{r}{u}\arrow{d}{\pi_{\varrho}}& S\\
Y\arrow[swap]{r}{f}\arrow{ru}{\widetilde{f}} & X&
\end{tikzcd}
\]
 Since $f$ is proper,  $\tilde{f}$ is also proper and thus $\widetilde{f}(Y)$ is an irreducible analytic subvariety of $\widetilde{X}_{\varrho}$. Therefore, $\widetilde{f}(Y)$ is an irreducible component of $\pi_\varrho^{-1}(f(Y))$. If we choose different lifts $\widetilde{f}$ of $f$, $\widetilde{f}(Y)$ can be mapped surjectively onto any given irreducible component of $\pi_\varrho^{-1}(f(Y))$. Since $u\circ\widetilde{f}(Y)$ is a point, it follows that for any connected component $W$ of $\pi_\varrho^{-1}(f(Y))$, the image $u(W)$ is a point $P\in S$. This implies that $\varrho({\rm Im}[\pi_1(Z)\to \pi_1(X)])$ fixes $P$. Let $g\in {\rm GL}_N(\C)$ such that $g{\rm U}_N(\C)=P$.   Hence $\varrho({\rm Im}[\pi_1(Z)\to \pi_1(X)])\subset g{\rm U}_N(\C)g^{-1}$.  
 
 \medspace
 
 \noindent {\it Proof of \Cref{item:bounded}.} By \eqref{working_subscheme}, we know that  $f^*\varrho=1$, which is thus bounded. It follows from \cref{thm:KE} that $\varrho({\rm Im}[\pi_1(Z)\to \pi_1(X)])$ is bounded. 
\end{proof}

\begin{proposition}\label {prop:finite_image_for_Z}
Let $\varrho:\pi_1(X)\to {\rm GL}_N(k)$ be a reductive representation where $k$ is an algebraic number field. If $\varrho\in M_Y(k)$, then $\varrho({\rm Im}[\pi_1(Z)\to \pi_1(X)])$ is finite.   
\end{proposition}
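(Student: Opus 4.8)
The plan is to exhibit $\Gamma:=\varrho(\mathrm{Im}[\pi_1(Z)\to\pi_1(X)])$ as a finitely generated subgroup of $\mathrm{GL}_N(k)$ that is relatively compact at \emph{every} place of $k$, and then to conclude via the classical fact that such a group is finite. Finite generation is immediate since $\pi_1(Z)$, hence its quotient $\Gamma$, is finitely generated, $Z$ being quasi-projective.

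First I would establish boundedness place by place. Fix an embedding $k\hookrightarrow\C$; then $\varrho\otimes_k\C$ is semisimple (characteristic zero preserves complete reducibility under field extension) and $[\varrho\otimes_k\C]=[\varrho]\in M_Y(k)\subset M_Y(\C)$, so \cref{item:unitary} puts $\Gamma$ inside a conjugate of $\mathrm{U}_N(\C)$ --- in particular $\Gamma$ is relatively compact in $\mathrm{GL}_N(\C)$ and each of its elements is semisimple. For a non-archimedean place $v$, base-changing $\varrho$ along $k\hookrightarrow k_v$ again preserves semisimplicity, and $[\varrho\otimes_k k_v]$ still lies in $M_Y(k_v)$ because $M_Y$ is defined over $\Q$ and $[\varrho]\in M_Y(k)$; hence \cref{item:bounded} gives that $\Gamma$ is bounded in $\mathrm{GL}_N(k_v)$, so by \cref{lem:bounded_and_compact} its closure is compact, whence $\Gamma$ stabilizes an $\mathcal{O}_v$-lattice and is conjugate into $\mathrm{GL}_N(\mathcal{O}_v)$.

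The rest is the standard integrality argument. Conjugation-invariance of characteristic polynomials together with the previous step gives $\chi_\gamma(t):=\det(tI-\gamma)\in\mathcal{O}_v[t]$ for every finite $v$ and every $\gamma\in\Gamma$, hence $\chi_\gamma(t)\in\mathcal{O}_k[t]$; the archimedean boundedness bounds the coefficients of $\chi_\gamma$ at each archimedean place. As $\mathcal{O}_k$ is a lattice in $\prod_{v\mid\infty}k_v$, only finitely many such polynomials occur, so the set $E\subset\overline{\Q}$ of all eigenvalues of all elements of $\Gamma$ is finite; and if $\lambda\in E$ is an eigenvalue of $\gamma$, then $\lambda^n$ is an eigenvalue of $\gamma^n\in\Gamma$ for all $n$, so $\{\lambda^n\}_{n\ge1}\subset E$ forces the nonzero number $\lambda$ to be a root of unity, of order dividing some fixed $d$ depending only on $E$. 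Through the fixed archimedean embedding each $\gamma\in\Gamma$ is then semisimple with all eigenvalues $d$-th roots of unity, so $\gamma^d=\mathrm{Id}$; thus $\Gamma$ is a finitely generated torsion linear group in characteristic zero and is therefore finite by Schur's theorem. I expect the only point needing real care to be the transfer of the hypotheses of \cref{item:unitary} and \cref{item:bounded} across the base changes $k\hookrightarrow\C$ and $k\hookrightarrow k_v$ (preservation of semisimplicity, and $M_Y(k)\subset M_Y(k_v)$); everything after that is the routine finiteness criterion for subgroups of $\mathrm{GL}_N$ over a number field that are bounded at all places, and should present no obstacle.
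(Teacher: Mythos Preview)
Your argument is correct and shares its first half with the paper's proof: both invoke \cref{prop:compact&bounded} at every place of $k$ to see that $\Gamma=\varrho(\mathrm{Im}[\pi_1(Z)\to\pi_1(X)])$ lies in a conjugate of $\mathrm{U}_N(\C)$ at each archimedean place and is bounded at each non-archimedean place. One small expository slip: you ``fix an embedding $k\hookrightarrow\C$'' but later need boundedness of the coefficients of $\chi_\gamma$ at \emph{every} archimedean place to exploit that $\mathcal{O}_k$ is a lattice in $\prod_{v\mid\infty}k_v$; of course the same application of \cref{item:unitary} works for each archimedean embedding, so this is only a matter of wording.

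Where you diverge from the paper is in the endgame. The paper argues directly that the non-archimedean boundedness forces $\Gamma\subset\mathrm{GL}_N(\mathcal{O}_k)$, embeds $\mathrm{GL}_N(\mathcal{O}_k)$ discretely into $\prod_{w\in\mathrm{Ar}(k)}\mathrm{GL}_N(\C)$, and intersects with the compact set $\prod_w g_w\mathrm{U}_N(\C)g_w^{-1}$ to conclude finiteness. Your route instead passes to the conjugation-invariant data $\chi_\gamma$, shows there are only finitely many such polynomials, deduces that all eigenvalues are roots of unity of uniformly bounded order, and then combines semisimplicity of each $\gamma$ (from the unitary containment) with Schur's theorem. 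Your path is a bit longer but arguably more robust: by working with characteristic polynomials you never need the literal containment $\Gamma\subset\mathrm{GL}_N(\mathcal{O}_k)$, only the weaker (and immediate) fact that $\chi_\gamma\in\mathcal{O}_k[t]$, which sidesteps the question of whether boundedness at $v$ gives integrality of matrix entries rather than just integrality after conjugation.
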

\begin{proof}
Let ${\rm Ar}(k)$ be the set of archimedean places of $k$. For any $w\in {\rm Ar}(k)$, set $\varrho_w=w\circ \varrho:\pi_1(X)\to {\rm GL}_N(\C)$. Note that $M_Y$ is defined over $\Q$ as $[1]\in M_{\rm B}(Y,N)(\Q)$.   It follows that  $\varrho_w\in M_Y(\C)$.     By \cref{prop:compact&bounded}, $\varrho_w({\rm Im}[\pi_1(Z)\to \pi_1(X)])\subset g_{w}{\rm U}_N(\C)g_{w}^{-1}$, for some $g_w\in {\rm GL}(N, \C)$. 
	
For any non-archimedean place $\nu$ of $k$, denote by $k_\nu$ its non-archimedean completion. Let $\varrho_{\nu}:\pi_1(X)\to {\rm GL}(N,k_\nu)$ be representation induced by $\varrho$.   By \cref{prop:compact&bounded}, $\varrho_\nu({\rm Im}[\pi_1(Z)\to \pi_1(X)])$ is bounded for any $\nu$. It follows that $\varrho({\rm Im}[\pi_1(Z)\to \pi_1(X)])\subset {\rm GL}_N(O_{k})$, where $O_{k}$ is the ring of integers. Note that ${\rm GL}_N(O_{k})\subset \prod_{w\in {\rm Ar}(k)}{\rm GL}_N(\C)$ is discrete. It follows that for the representation $\sigma=\oplus_{w\in {\rm Ar}(k)}\varrho_w: \pi_1(X)\to \prod_{w\in {\rm Ar}(k)}{\rm GL}_N(\C)$, we have $\sigma({\rm Im}[\pi_1(Z)\to \pi_1(X)])\subset {\rm GL}_N(O_{k})\cap \prod_{w\in {\rm Ar}(k)}g_{w}{\rm U}_N(\C)g_{w}^{-1}$, which is finite. Therefore, $\varrho({\rm Im}[\pi_1(Z)\to \pi_1(X)])$  is finite. 
\end{proof}


\subsection{Proof of  \texorpdfstring{\cref{main}}{Theorem A} in characteristic zero} 

The following proposition is a variant of   \cite[Claim 3.10]{DY23}.
\begin{proposition} \label{prop:zero_dimensional}
The subset $j_Z(M_Y(\overline{\Q}))$ is $0$-dimensional and consists of finitely many points, say $\{[\tau_i:\pi_1(Z)\to {\rm GL}_N(\overline{\Q})]\}_{i=1,\ldots,\ell}$. Furthermore, the number $\ell$ is no more than the number of the geometrically connected components of $M_Y$.
\end{proposition}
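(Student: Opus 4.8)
The plan is to combine the boundedness results of \cref{prop:compact&bounded} with \cref{lem:Yamanoi} applied over a suitable non-archimedean local field, together with a spreading-out argument to pass from $\overline{\Q}$ to a number field. First I would observe that any point of $M_Y(\overline{\Q})$ is defined over some number field $k$, and that $j_Z$ sends it to $[\iota^*\varrho]$ for a semisimple $\varrho:\pi_1(X)\to\GL_N(k)$ with $f^*\varrho=1$; by \cref{prop:finite_image_for_Z}, the restriction $\iota^*\varrho$ has finite image, hence so does its image in $M_{\rm B}(Z,N)$. So the real content is the \emph{finiteness} of the set $j_Z(M_Y(\overline{\Q}))$, not merely that each point corresponds to a finite-image representation.

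To get finiteness, the key step is to show that $j_Z(M_Y)$ is a $0$-dimensional (hence finite, being also of finite type) closed subscheme of $M_{\rm B}(Z,N)$. I would argue this at the level of $\C$-points (or $\overline{\Q}_\ell$-points): suppose for contradiction that $j_Z(M_Y)$ has positive dimension. Then, since $M_Y$ is a $\Q$-scheme of finite type and $j_Z|_{M_Y}$ is a morphism, its image would contain a positive-dimensional constructible set, and one could find a point of $j_Z(M_Y)(\overline{\Q})$ not contained in any finite subset — more usefully, one can find a non-archimedean local field $\K$ (a completion of a number field, or $\C((t))$ after base change) and a family of points of $M_Y(\K)$ whose images under $j_Z$ form an unbounded subset of $M_{\rm B}(Z,N)(\K)$. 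But \cref{prop:compact&bounded}\ref{item:bounded} says that for every $[\varrho]\in M_Y(\K)$ the group $\varrho(\mathrm{Im}[\pi_1(Z)\to\pi_1(X)])$ is bounded, i.e.\ $\iota^*\varrho$ is a bounded representation of $\pi_1(Z)$; by \cref{lem:DY23bounded} this property depends only on $[\iota^*\varrho]=j_Z([\varrho])$, so $j_Z(M_Y(\K))$ lies in the image $M_0$ of the bounded locus, which is compact (hence bounded) by \cref{lem:Yamanoi}. This contradicts unboundedness. A cleaner way to package this: $j_Z(M_Y(\K))$ is both the image of a finite-type scheme (so constructible) and contained in the compact set $M_0$; a positive-dimensional constructible subset of $M_{\rm B}(Z,N)(\K)$ is never bounded (e.g.\ some coordinate function is nonconstant on it, hence takes values of arbitrarily negative valuation). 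Therefore $j_Z(M_Y)$ is $0$-dimensional.

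Once $j_Z(M_Y)$ is $0$-dimensional and of finite type over $\Q$, it consists of finitely many closed points; each closed point has residue field a number field, and its $\overline{\Q}$-points give the finitely many classes $[\tau_i:\pi_1(Z)\to\GL_N(\overline{\Q})]$, which by the first paragraph have finite image and are semisimple (they are points of a character variety). For the bound on $\ell$: the morphism $j_Z|_{M_Y}:M_Y\to M_{\rm B}(Z,N)$ is constant on each connected component of $M_Y$ (a connected scheme mapping to a $0$-dimensional scheme has connected, hence single-point, image), so the number of points in the image is at most the number of connected components of $M_Y$; passing to geometric connected components (base change to $\overline{\Q}$) only increases the count, and each geometric point of the image is one of the $[\tau_i]$, giving $\ell\le$ number of geometrically connected components of $M_Y$.

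The main obstacle I anticipate is the passage from ``every individual point of $M_Y(\K)$ has bounded image under $j_Z$'' to ``$j_Z(M_Y)$ is $0$-dimensional as a scheme'': one must be careful that boundedness is about a single local field $\K$ while $0$-dimensionality is a scheme-theoretic statement over $\Q$. The bridge is that a positive-dimensional finite-type $\Q$-scheme acquires, after base change to a well-chosen non-archimedean local field $\K$, an unbounded set of $\K$-points in its image in $M_{\rm B}(Z,N)(\K)$ — this is exactly where a coordinate/valuation argument or an explicit curve-in-the-image argument is needed, and it is the step that \cite[Claim 3.10]{DY23} handles in the ambient setting that we are adapting here. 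Everything else (finite image via \cref{prop:finite_image_for_Z}, semisimplicity, the component count) is formal.
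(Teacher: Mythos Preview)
Your proposal is correct and follows essentially the same route as the paper: a contradiction via unboundedness over a well-chosen non-archimedean local field, using \cref{prop:compact&bounded}\ref{item:bounded} together with \cref{lem:DY23bounded} and \cref{lem:Yamanoi}, and then the connected-component argument for the bound on $\ell$. The paper carries out precisely the ``curve-in-the-image'' step you flag as the obstacle, by lifting to the representation scheme $\mathfrak{R}=\pi_X^{-1}(M_Y)$, choosing an irreducible curve $C\subset\mathfrak{R}$ on which a separating function $\psi\circ j_Z\circ\pi_X$ is nonconstant, and taking $\K$ to be the completion at a non-archimedean place of a number field over which $C$ and this morphism are defined (so that all needed representations live over a single finite extension $\mathbb{L}/\K$).
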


\begin{proof}
We only need to show that for any two semisimple representations $\varrho_1,\varrho_2:\pi_1(X)\to \mathrm{GL}_N(\overline{\Q})$ such that $[\varrho_1], [\varrho_2]$ are in the same geometrically connected component of $M_Y(\overline{\Q})$, we have $j_Z([\varrho_1])=[\iota^*(\varrho_1)]=[\iota^*(\varrho_2)]=j_Z([\varrho_2])$. 

Let $\mathfrak{R}:=\pi_X^{-1}(M_Y)\subset R_\mathrm{B}(X,N)$. Then $\varrho_1, \varrho_2\in \mathfrak{R}(\overline{\Q})$. Since $\pi_1(Z), \pi_1(X)$ are finitely generated,  $M_\mathrm{B}(X, N), M_\mathrm{B}(Z, N)$ as affine schemes of finite type defined over $\Q$ and $\mathfrak{R}$ is a Zariski closed subset   defined over $\overline{\Q}$.   

We first show that the conclusion holds if $\varrho_1, \varrho_2$ are in the same geometrically irreducible component of $\mathfrak{R}(\overline{\Q})$. We prove this by contradiction. Assume that $[\iota^*(\varrho_1)]\neq [\iota^*(\varrho_2)]$, then there exists a $\overline{\Q}$-morphism $\psi: M_\mathrm{B}(Z, N) \to \A^1$ such that $\psi([\iota^*(\varrho_1)])\neq \psi([\iota^*(\varrho_2)])$. We can find a   closed irreducible curve $C\subset \mathfrak{R}$ containing both $\varrho_1$ and $\varrho_2$.  Then $\psi\circ \pi \circ \iota^*|_{C}=\psi\circ j_Z\circ \pi_X|_{C}: {C}\to \A^1$ is non-constant and thus generically finite. We can find an open subset ${U}\subset \A^1$ over which the above map is finite.  Let $\K_0$ be a finite extension of $\Q$ such that $C$ is defined over $\K_0$ and $\psi\circ \pi \circ \iota^*|_{C}$   is a $\K_0$-morphism.  Let $\nu$ be a non-archimedean place of $\K_0$ and $\K$ be the completion of $\K_0$ with respect to $\nu$. Then $\K$ is a non-archimedean local field of characteristic zero.

Take $x\in U(\K)$ and $\varrho\in C(\overline{\K})$ over $x$. Then $\varrho$ is defined over some finite extension of $\K$ whose degree is bounded by the degree $\psi\circ \pi \circ \iota^*|_{C}$. There are finitely many extensions and we can assume that all points over $U(\K)$ are contained in $C(\mathbb{L})$, where $\mathbb{L}$ is a finite extension of $\K$.  Since $U(\K)$ is unbounded, we have $\psi\circ \pi \circ \iota^*(C(\mathbb{L}))\subset \A^1(\mathbb{L})$ is unbounded.

Take $R_0$ to be the set of all bounded representations in $R_\mathrm{B}(Z, N)(\mathbb{L})$, then $M_0:=\pi(R_0)$ is compact in $R_\mathrm{B}(Z, N)(\mathbb{L})$ with respect to analytic topology by \cref{lem:Yamanoi}. By \cref{lem:bounded_and_compact}, $M_0$ is bounded and $\psi(M_0)$ is also bounded in $\A^1(\mathbb{L})$. Thus there exists $\varrho\in C(\mathbb{L})$ such that $\pi\circ \iota^*(\varrho)=[\iota^*(\varrho)]\notin M_0$. It follows that $\iota^*\varrho:\pi_1(Z)\to {\rm GL}_N(\mathbb{L})$ is unbounded. On one hand, by \Cref{lem:DY23bounded}, $(\iota^*\varrho)^{ss}:\pi_1(Z)\to {\rm GL}_N(\overline{\mathbb{L}})$  is also unbounded. On the other hand, note that $[\varrho^{ss}]=[\varrho]\in M_Y(\overline{\mathbb{L}})$. By the definition of $M_Y$, we have $f^*\varrho^{ss}:\pi_1(Y)\to {\rm GL}_N(\overline{\mathbb{L}})$ is trivial.   It follows   \Cref{prop:compact&bounded} that $\iota^*(\varrho^{ss}):\pi_1(Z)\to  {\rm GL}_N(\overline{\mathbb{L}})$ is bounded.  Note that $[\iota^*(\varrho^{ss})]=[\iota^*\varrho]$. By \Cref{lem:DY23bounded} again, $(\iota^*\varrho)^{ss}$ is also bounded. This is a contradiction, and thus we must have $[\iota^*(\varrho_1)]= [\iota^*(\varrho_2)]$ when $\varrho_1, \varrho_2$ are in the same geometrically irreducible component of $\mathfrak{R}$.


Let $M'$  be a   geometrically connected component of $M_Y$. Consider a geometrically irreducible component $M''$ of $M'$. We can choose a geometrically irreducible component $W$ of $\pi_X^{-1}(M'')$ such that $\pi_X(W)$ is dense in $M''$. It follows that $W$ is an irreducible component of $\mathfrak{R}$. By the above argument, we know that $j_Z \circ \pi_X(W)$ is a point in $M_{\rm B}(Z, N)$. Thus, $j_Z(M'')$ is also a point in $M_{\rm B}(Z,N)$. 
Consequently, $j_Z(M')$ is a point in $M_{\rm B}(Z,N)$.

Let $M_1,\ldots, M_k$ be all geometrically connected components of $M_Y$, which are all defined over $\overline{\Q}$. We then can take semisimple representations $\{\varrho_i:\pi_1(X)\to {\rm GL}_N(\overline{\Q})\}_{j=1,\ldots,k}$  such that $[\varrho_i]\in M_i(\overline{\Q})$. Then the image $j_Z(M_Y(\overline{\Q}))=\{j_Z([\varrho_i]) \}_{i=1,\ldots, k}=:\{[\tau_i] \}_{i=1,\ldots, \ell}$ for some $\ell\leqslant k$.  The proposition is proved.  
\end{proof}

Now we can prove \cref{main} in characteristic zero.

\begin{theorem}\label{thm:trivial}
Let $f: Y\to X$ be a morphism between connected smooth quasi-projective varieties and $Z$ be the closure of the image of $f$. There exists finitely many semisimple representations $\{\tau_i:\pi_1(Z)\to {\rm GL}_N(\overline{\Q})\}_{i=1,\ldots,\ell}$ such that  
	\begin{thmlist}
		\item for any $i$, $\tau_i(\pi_1(Z))$ is a finite group;
	\item if $\varrho:\pi_1(X)\to {\rm GL}_N(K)$ is a semisimple representation with K a  field of characteristic zero such that $f^*\varrho=1$, then $\iota^*\varrho:\pi_1(Z)\to {\rm GL}_N(\C)$ is conjugate to some $\tau_i$. Here $\iota: Z\hookrightarrow X$ is the natural inclusion.  
	\end{thmlist}
\end{theorem}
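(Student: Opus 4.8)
The plan is to reduce the general characteristic-zero statement to the number-field case already handled by \Cref{prop:finite_image_for_Z} and \Cref{prop:zero_dimensional}. We take $\{\tau_i:\pi_1(Z)\to \GL_N(\overline{\Q})\}_{i=1,\ldots,\ell}$ to be exactly the finite set of representatives of $j_Z(M_Y(\overline{\Q}))$ produced by \Cref{prop:zero_dimensional}. By construction each $[\tau_i]$ lies in $j_Z(M_Y(\overline{\Q}))$, so there is a semisimple $\varrho_i:\pi_1(X)\to\GL_N(\overline{\Q})$ with $[\varrho_i]\in M_Y(\overline{\Q})$ and $[\iota^*\varrho_i]=[\tau_i]$. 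Since $\varrho_i$ takes values in $\GL_N(\overline{\Q})$, its image is generated by finitely many matrices whose entries lie in a number field $k$, hence $\iota^*\varrho_i$ is defined over a number field; \Cref{prop:finite_image_for_Z} applied to (the semisimplification of) $\iota^*\varrho_i$ gives that $\tau_i(\pi_1(Z))$ is finite, which is part (i).

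For part (ii), let $\varrho:\pi_1(X)\to\GL_N(K)$ be semisimple with $f^*\varrho=1$. Because $\pi_1(X)$ is finitely generated, the matrix entries of $\varrho$ on a finite generating set, together with the structure constants, generate a finitely generated subfield $K_0\subset K$ of characteristic zero; replacing $K$ by $K_0$ we may assume $K$ is finitely generated over $\Q$. The representation $\varrho$ then corresponds to a $K_0$-point of $M_{\rm B}(X,N)$, and since $f^*\varrho=1$ this point lies in $M_Y(K_0)$. Now I would spread out: choose a finitely generated $\Z$-algebra $A\subset K_0$ over which $\varrho$ and the relevant data are defined, so that $[\varrho]\in M_Y(A)$. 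Specializing $A$ at a closed point of $\spec A$ with residue field a number field $k$ — such points are dense — produces a semisimple representation $\varrho':\pi_1(X)\to\GL_N(k)$ with $[\varrho']\in M_Y(k)$. The key point is that $j_Z$ is constant on each geometrically connected component of $M_Y$ by \Cref{prop:zero_dimensional}, and $[\varrho]$ and $[\varrho']$ lie in the same such component (the specialization of a connected component stays in that component); hence $[\iota^*\varrho]=[\iota^*\varrho']\in j_Z(M_Y(\overline{\Q}))$, so $[\iota^*\varrho]=[\tau_i]$ for some $i$. Finally, $\varrho$ semisimple does not a priori force $\iota^*\varrho$ semisimple, but $[\iota^*\varrho]=[\tau_i]$ means $(\iota^*\varrho)^{ss}\sim\tau_i$, and since $\tau_i(\pi_1(Z))$ is finite — in particular $\tau_i$ is semisimple and bounded at every place — one checks $\iota^*\varrho$ is itself semisimple (a representation whose semisimplification has finite, hence completely reducible, image, with the same character, must already be completely reducible over a field of characteristic zero). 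Thus $\iota^*\varrho\sim\tau_i$ over $\overline{K}$, and after noting $\tau_i$ is conjugate into $\GL_N(\C)$ because it has finite image, we get the stated conclusion.

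The main obstacle is the passage from an arbitrary characteristic-zero field $K$ to a number field while keeping track of the connected component of $M_Y$: one must argue that the specialization point $[\varrho']$ genuinely lies in the Zariski closure of $\{[\varrho]\}$ inside $M_Y$, so that continuity of $j_Z$ (constancy on components) transfers the value $[\iota^*\varrho]$ from the generic point to the special one. This is a standard spreading-out/specialization argument — choose $A$ finitely generated over $\Z$ with fraction field containing the entries, use that closed points with number-field residue fields are dense in $\spec A$ and that every such point lies in the closure of the generic point of its component — but it is the one place where some care with the scheme-theoretic structure of $M_Y$ over $\Z$ (as set up in \Cref{Betti_moduli_space}) is needed. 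Everything else then follows formally from \Cref{prop:finite_image_for_Z,prop:zero_dimensional}.
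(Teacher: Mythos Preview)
Your part~(i) argument matches the paper's. In part~(ii) there is a real gap, and separately your reduction step is more elaborate than necessary.

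\medskip

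\textbf{The gap.} The assertion ``a representation whose semisimplification has finite image must already be completely reducible over a field of characteristic zero'' is false. Take $\pi_1(Z)=\mathbb{Z}$ (e.g.\ $Z=\mathbb{C}^*$) and the representation $n\mapsto\begin{pmatrix}1&n\\0&1\end{pmatrix}$: its semisimplification is trivial, yet it is not semisimple. So from $[\iota^*\varrho]=[\tau_i]$ alone you cannot conclude $\iota^*\varrho\sim\tau_i$; you only get $(\iota^*\varrho)^{ss}\sim\tau_i$. The paper closes this gap not algebraically but by invoking \Cref{prop:compact&bounded}\ref{item:unitary}: since $\varrho$ is semisimple over $\mathbb{C}$ and $f^*\varrho=1$, the archimedean harmonic-map argument forces $\iota^*\varrho(\pi_1(Z))$ into a conjugate of ${\rm U}_N(\mathbb{C})$, and unitary representations are automatically semisimple. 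You need some input of this kind; the purely representation-theoretic statement you wrote does not hold.

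\medskip

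\textbf{On the reduction.} Your spreading-out and specialization to a number field is correct in spirit but unnecessary. Once you accept (from the proof of \Cref{prop:zero_dimensional}) that $j_Z$ is \emph{scheme-theoretically} constant on each geometrically connected component of $M_Y$, the value $j_Z([\varrho])$ for any $[\varrho]\in M_Y(K)$ is already one of the $[\tau_i]$, with no specialization needed. The paper does something even simpler: since $\pi_1(X)$ is finitely generated, $\varrho$ is defined over a subfield of $K$ of finite transcendence degree over $\mathbb{Q}$, which embeds into $\mathbb{C}$; one then works directly with $[\varrho]\in M_Y(\mathbb{C})$. Either route reaches $[\iota^*\varrho]=[\tau_i]$, but the semisimplicity of $\iota^*\varrho$ still requires the unitary input above.
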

\begin{proof}
By \cref{prop:zero_dimensional}, $j_Z(M_Y(\overline{\Q}))$ is zero dimensional.  Let $\{\tau_i:\pi_1(Z)\to {\rm GL}_N(\overline{\Q})\}_{i=1,\ldots,\ell}$ be semisimple representations such that $\{[\tau_i]\}_{i=1,\ldots,\ell}$ is the image $j_Z(M_Y(\overline{\Q}))$.  Then there exist semisimple representations $\{\varrho_i:\pi_1(X)\to {\rm GL}_N(\overline{\Q})\}_{i=1,\ldots,\ell}$ such that $[\varrho_i]\in M_Y(\overline{\Q})$ and $[\iota^*\varrho_i]=j_Z([\varrho_i])=[\tau_i]$.  By \cref{prop:finite_image_for_Z},   $\iota^*\varrho_i(\pi_1(Z))$ is a finite group. Hence $\iota^*\varrho_i$ is a semisimple representation.  It follows that $\iota^*\varrho_i$ is conjugate to $\tau_i$. Hence $\tau_i(\pi_1(Z))$ is finite.   

Since $\pi_1(X)$ is finitely presented,  there exists a subfield $k\subset K$ with ${\rm tr.deg.}(k/\Q)<\infty$ such that   $\varrho(\pi_1(X))\subset \GL_{N}(k)$. Then there exists an embedding $k\hookrightarrow \C$ and we may think of $\varrho$ as a complex linear representation.       Therefore, $[\iota^*\varrho]=j_Z([\varrho])=[\tau_i]$ for some $i$ by \cref{prop:zero_dimensional}.  By \cref{prop:compact&bounded}, we know that $\iota^*\varrho(\pi_1(Z))\subset {\rm U}_N(\C)$ up to a congujation. Note that any unitary representation is semisimple. It follows that $\iota^*\varrho:\pi_1(Z)\to {\rm GL}_N(\C)$ is a semisimple representation. Hence $\iota^*\varrho$ is conjugate to $\tau_i$. The theorem is proved.    
\end{proof}

\begin{remark}
\begin{thmlist}
	\item  	The above theorem is a generalization of  \cite[Theorem 1.1]{LR1996} in characteristic $0$ to the quasi-projective case, along with a more explicit description of the finite set $\Delta_N$ ($N$ represents $n$ there) they defined. 
	\item  A weaker result is proven in \cite[Theorem A]{DY23}: if $\varrho:\pi_1(X)\to {\rm GL}_N(\C)$ is a semisimple representation such that $f^*\varrho=1$, then $\varrho({\rm Im}[\pi_1(Z)\to \pi_1(X)])$ is finite.  
\end{thmlist}
\end{remark}

\subsection{Proof of  \texorpdfstring{\cref{main}}{Theorem A} in positive characteristic}
In this subsection, let $f: Y\to X$ be a morphism between smooth complex quasi-projective varieties and let $Z$ be the closure of $f(Y)$. It induces a morphism $j_Y:\mxp\to \myp$ between affine $\bF_p$-schemes of finite type. The natural inclusion $\iota: Z\hookrightarrow X$ induces $j_Z:\mxp\to \mzp$ which is also a morphism between affine $\bF_p$-schemes of finite type.  Let $M_{Y,p}:=j_Y^{-1}([1])$, which is a Zariski closed subset of $\mxp$ defined over $\bF_p$.

We first recall the following lemma in \cite{DY23b}.  
\begin{lemma}\label{lem:finite group}
	Let $\K$ be an algebraically closed field of characteristic $p>0$ and let $\Gamma$ be a finitely generated group. Let $\varrho:\Gamma\to G(\K)$ be a representation such that its semisimplification is conjugate to some $\tau:\Gamma\to \GL_{N}(\overline{\bF_p})$. Then $\varrho(\Gamma)$ is finite.
\end{lemma}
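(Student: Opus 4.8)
The plan is to reduce, by passing to a finite-index subgroup, to the case where the semisimplification is \emph{trivial}, and then to exploit that over a field of characteristic $p$ every unipotent matrix has $p$-power order, so that a finitely generated unipotent linear group is automatically finite.

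First I would observe that $\tau(\Gamma)$ is finite: since $\Gamma$ is finitely generated, $\tau(\Gamma)$ is contained in $\GL_N(R)$ for some $\bF_p$-subalgebra $R\subset\overline{\bF_p}$ that is finitely generated (take the subring generated by the entries of $\tau$ and $\tau^{-1}$ on a finite generating set); being finitely generated and integral over $\bF_p$, the ring $R$ is a finite field $\bF_q$, so $\tau(\Gamma)\subset\GL_N(\bF_q)$ is finite. Since $\overline{\bF_p}\subset\K$ and $\varrho^{ss}$ is conjugate over $\K$ to $\tau$, the image $\varrho^{ss}(\Gamma)$ is finite as well. Set $\Gamma_0:=\ker\varrho^{ss}$, a normal finite-index subgroup of $\Gamma$, hence itself finitely generated.

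Next, $\varrho|_{\Gamma_0}$ has trivial semisimplification, so every composition factor of $\K^N$ as a $\Gamma_0$-module is the trivial $1$-dimensional module. Refining a composition series to a complete $\Gamma_0$-stable flag $0\subset V_1\subset\cdots\subset V_N=\K^N$ with trivial action on each $V_i/V_{i-1}$ and choosing an adapted basis, we may conjugate so that $\varrho(\Gamma_0)\subset U_N(\K)$, the group of upper-triangular unipotent matrices. For $u=I+n\in U_N(\K)$ with $n$ strictly upper-triangular one has $n^N=0$, and since ${\rm char}\,\K=p$,
\[
u^{p^s}=(I+n)^{p^s}=I+n^{p^s}=I\qquad\text{whenever }p^s\ge N,
\]
so $U_N(\K)$, and a fortiori $\varrho(\Gamma_0)$, is a torsion group. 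Being also finitely generated and nilpotent (a subgroup of the nilpotent group $U_N(\K)$), $\varrho(\Gamma_0)$ is polycyclic; and a torsion polycyclic group is finite, by an immediate induction on the length of a cyclic series. Hence $\varrho(\Gamma_0)$ is finite, and as $[\varrho(\Gamma):\varrho(\Gamma_0)]\le[\Gamma:\Gamma_0]<\infty$, the group $\varrho(\Gamma)$ is finite.

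The only step that is not purely formal is deducing finiteness of $\varrho(\Gamma_0)$ from being finitely generated and torsion; I would handle it through the nilpotency furnished by the reduction into $U_N(\K)$ rather than invoking the deeper theorems of Schur and Burnside on torsion linear groups. So the point that needs care is precisely the reduction step: using the triviality of the semisimplification on $\Gamma_0$ to place $\varrho(\Gamma_0)$ inside a unipotent --- hence nilpotent --- subgroup of $\GL_N(\K)$.
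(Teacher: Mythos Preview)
Your proof is correct and follows essentially the same approach as the paper's: both reduce to a finite-index subgroup on which the semisimplification is trivial, conjugate the image into $U_N(\K)$, and then use the nilpotent structure of $U_N(\K)$ together with the fact that in characteristic $p$ its elements (resp.\ its successive central quotients $\mathbb{G}_{a,\K}$) are torsion to conclude finiteness. Your write-up is somewhat more detailed---you spell out why $\tau(\Gamma)\subset\GL_N(\overline{\bF_p})$ is finite and phrase the endgame as ``finitely generated nilpotent torsion $\Rightarrow$ polycyclic torsion $\Rightarrow$ finite'' rather than via the central series with $\mathbb{G}_a$-quotients---but the substance is identical.
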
 
Since the proof is short, we recall it for the sake of completeness. 
\begin{proof}[Proof of \cref{lem:finite group}]
	Since the image of $\tau$ is finite, we can replace $\Gamma$ with a finite index subgroup such that $\tau(\Gamma)$ is trivial. Hence, we can assume that the semisimplification of $\varrho$ is trivial. Therefore, some conjugation $\sigma$ of $\varrho$ has image in ${\rm U}_N(\K)$
	of all upper-triangular matrices in $\GL_{N}(\K)$
	with 1's on the main diagonal.  Note that  ${\rm U}_N(\K)$  admits a central normal series whose successive
	quotients are isomorphic to $\mathbb{G}_{a,\K}$. It follows that $\sigma(\Gamma)$ admits a   central normal series whose successive
	quotients are finitely generated subgroups of $\mathbb{G}_{a,\K}$, which are finite groups.  It follows that $\sigma(\Gamma)$  is finite. The lemma is proved. 
\end{proof}
\begin{lemma}\label{lem:finite}
	The subset $j_Z(M_{Y,p})$ is $0$-dimensional and consists of finitely many points, say $\{[\tau_i:\pi_1(Z)\to {\rm GL}_N(\overline{\bF_p})]\}_{i=1,\ldots,\ell}$. Furthermore, the number $\ell$ is no more than the number of the geometrically connected components of $M_{Y,p}$. 
\end{lemma}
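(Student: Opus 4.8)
The plan is to mimic the proof of \cref{prop:zero_dimensional}, replacing the archimedean/non-archimedean dichotomy there with a purely non-archimedean argument adapted to positive characteristic. First I would set $\mathfrak R_p := \pi_X^{-1}(M_{Y,p}) \subset R_{\mathrm B}(X,N)_{\bF_p}$, a Zariski closed $\bF_p$-subscheme. As in the characteristic zero case, it suffices to show that $j_Z$ is constant on each geometrically connected component of $M_{Y,p}$, and for that it suffices (via the same "connect by an irreducible curve, then propagate through geometrically irreducible components of $\mathfrak R_p$" bookkeeping) to show: if $\varrho_1,\varrho_2$ lie in the same geometrically irreducible component $W$ of $\mathfrak R_p$, then $[\iota^*\varrho_1]=[\iota^*\varrho_2]$ in $\mzp$.

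For this key step I would argue by contradiction, exactly parallel to \cref{prop:zero_dimensional}: if not, choose an $\overline{\bF_p}$-morphism $\psi:\mzp\to \A^1$ separating the two images, an irreducible curve $C\subset \mathfrak R_p$ through $\varrho_1,\varrho_2$ defined over some finite field, so that $\psi\circ\pi\circ\iota^*|_C$ is non-constant hence generically finite onto an open $U\subset\A^1$. Now pass to a suitable non-archimedean local field: since we are in characteristic $p$, take $\K = \bF_q((t))$ for a finite field $\bF_q$ large enough that $C$, $\psi$ and $U$ are all defined over $\bF_q$. Pick $x\in U(\K)$ with $\nu(\psi\text{-coordinate})\to-\infty$ (possible since $U(\K)$ is unbounded in $\A^1(\K)$), and a point $\varrho\in C(\overline\K)$ above it; as before, after enlarging $\K$ to a finite extension $\mathbb L$ we may assume $\iota^*\varrho$ takes values in $\GL_N(\mathbb L)$ and is \emph{unbounded} (because $\psi$ applied to its class is unbounded, and bounded classes have bounded $\psi$-image by \cref{lem:bounded_and_compact}). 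On the other hand $[\varrho]=[\varrho^{ss}]\in M_{Y,p}(\overline{\mathbb L})$, so the semisimplification $f^*\varrho^{ss}$ has trivial semisimplification, hence (by \cref{lem:finite group}) finite image, hence is bounded; then \cref{thm:KE} applied to the reductive representation $\iota^*\varrho^{ss}$ forces $\iota^*\varrho^{ss}$ to have bounded image. Since $[\iota^*\varrho^{ss}]=[\iota^*\varrho]$, \cref{lem:DY23bounded} shows $\iota^*\varrho$ is bounded too — a contradiction.

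The one genuine difference from the characteristic zero argument — and the point I expect to need the most care — is that in characteristic $p$ there is no analogue of Yamanoi's \cref{lem:Yamanoi} guaranteeing that the set of bounded classes in $M_{\mathrm B}(Z,N)_{\bF_p}(\mathbb L)$ is compact, and indeed over a local field of positive characteristic such "harmonic map" compactness statements are not available. The fix is that I do not need compactness of the bounded locus at all: I produce the contradiction directly from a single point $\varrho$ chosen so that $[\iota^*\varrho]$ is unbounded, using only that $U(\K)$ is unbounded together with $\psi\circ\pi\circ\iota^*|_C$ being finite over $U$. So the structure is actually cleaner than in characteristic zero. The remaining steps — that $\mzp$ is of finite type over $\bF_p$ so that separating morphisms to $\A^1$ exist, that geometrically irreducible components behave well under the finite-field base change, and that $M_{Y,p}$ is defined over $\bF_p$ since $[1]\in M_{\mathrm B}(Y,N)_{\bF_p}(\bF_p)$ — are routine, as is the final packaging: pick semisimple $\varrho_i$ with $[\varrho_i]$ in the $i$-th geometrically connected component of $M_{Y,p}$ and set $[\tau_i]:=j_Z([\varrho_i])$, giving $\ell$ at most the number of such components.
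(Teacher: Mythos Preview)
Your proposal is essentially correct, but it takes a genuinely different route from the paper's proof, and there is one expository wrinkle worth flagging.

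\textbf{What the paper does.} The paper does \emph{not} mimic \cref{prop:zero_dimensional}. Instead it argues by contradiction that $j_Z(M_{Y,p})$ is positive-dimensional, chooses an irreducible curve $C_o\subset \pi_p^{-1}(M_{Y,p})$ over $\overline{\bF_p}$ with $j_Z\circ\pi_p(C_o)$ positive-dimensional, passes to the normalization $C$, compactifies to $\overline C$, and looks at the finitely many points $P_1,\dots,P_\ell$ at infinity. Each $P_i$ gives a completion $\widehat{\bF_q(C)}_{\nu_i}\simeq\bF_q((t))$ and hence a representation $\varrho_i:\pi_1(X)\to\GL_N(\bF_q((t)))$. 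If every $\iota^*\varrho_i$ were bounded, then each coefficient of the characteristic polynomial of $\iota^*\varrho_C(\gamma)$ would extend to a regular function on the projective curve $\overline C$, hence be constant, forcing $j_Z\circ\pi_p(C_o)$ to be a point. So some $\iota^*\varrho_i$ is unbounded; but $[\varrho_i]\in M_{Y,p}$ gives $[f^*\varrho_i]=1$, hence $f^*\varrho_i$ bounded by \cref{lem:DY23bounded}, hence $\iota^*\varrho_i$ bounded by \cref{thm:KE} --- contradiction. The ``regular functions on a projective curve are constant'' trick replaces any appeal to compactness of the bounded locus.

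\textbf{What you do differently.} You instead evaluate at $\bF_q((t))$-points of an open $U\subset\A^1$ and then argue that the resulting $\iota^*\varrho$ is an unbounded representation because $\psi([\iota^*\varrho])$ has very negative valuation. This is fine, but your justification ``bounded classes have bounded $\psi$-image by \cref{lem:bounded_and_compact}'' is exactly the content of Yamanoi's \cref{lem:Yamanoi}: one needs that every bounded representation is $\GL_N(\mathbb L)$-conjugate into $\GL_N(\mathcal O_{\mathbb L})$ (the standard lattice argument, valid in any characteristic), so that $M_0=\pi\big(R_{\rm B}(Z,N)(\mathcal O_{\mathbb L})\big)$ is the image of a bounded set, and only then does \cref{lem:bounded_and_compact} yield that $\psi(M_0)$ is bounded. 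In other words, you have not avoided \cref{lem:Yamanoi}; you have re-proved it implicitly. Since that lemma \emph{does} hold for local fields of positive characteristic with the same proof, your argument goes through --- but you should say so rather than claim the lemma is unavailable. The paper's ``points at infinity'' argument genuinely sidesteps this issue and is the cleaner of the two in characteristic $p$.
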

\begin{proof}
Assume by contradiction that $j_Z(M_{Y,p})$ is positive dimensional.  Write $R_{\bF_p} $ for $R_{\rm B}(X,N)_{\bZ}  \times_{\spec\, \bZ}\spec\, \bF_p$.  Since the morphism $\pi_p: R_{\bF_p}\to \mxp$ is surjective between affine $\bF_p$-schemes of finite type, we can find an affine irreducible curve $C_o\subset \pi_p^{-1}(M_{Y,p})$ defined over $\overline{\bF_p}$ such that $j_Z\circ \pi_p(C_o)$  is positive dimensional. Let $\overline{C}$ be a compactification of the normalization $C$ of $C_o$, and let $\{P_1,\ldots,P_\ell\}= \overline{C}\setminus C$.  Let $q=p^n$ such that $\overline{C}$ is defined over $\bF_q$ and $P_i\in R_{\bF_p}(\bF_q)$ for each $i$. 

By the universal property of the representation scheme $R$, $C$ gives rise to a representation $\varrho_C:\pi_1(X)\to \GL_N(\bF_q[C])$, where $\bF_q[C]$ is the coordinate ring of $C$. Consider the discrete valuation $\nu_i:\bF_q(C)\to \bZ$ defined by $P_i$, where $\bF_q(C)$ is the function field of $C$. Let $\widehat{\bF_q(C)}_{\nu_i}$  be the completion of $F_{q}(C)$ with respect to $\nu_i$. Then we have $\big(\widehat{\bF_q(C)}_{\nu_i},\nu_i\big)\simeq \big(\bF_q((t)),\nu\big)$, where $ \big(\bF_q((t)),\nu\big)$ is the formal Laurent field of $\bF_q$ with the valuation $\nu$ defined by  $\nu(\sum_{i=m}^{+\infty}a_it^i)=\min \{i\mid a_i\neq 0\}$.  
  Let $\varrho_i:\pi_1(X)\to \GL_N(\bF_q((t)))$ be the extension of $\varrho_C$ with respect to $\widehat{\bF_q(C)}_{\nu_i}$. 
  \begin{claim}\label{lem:simple2}
  	There exists some $i$ such that $\iota^*\varrho_i:\pi_1(Z)\to \GL_N(\bF_q((t)))$ is unbounded.
  \end{claim}
  \begin{proof}
  	This claim is proved in \cite{BDDM,DY23b} and we recall it here for the sake of completeness. Assume for the sake of contradiction that $\iota^*\varrho_i$ is bounded for each $i$. Then  after we replace $\iota^*\varrho_i$ by some conjugation, we have $\iota^*\varrho_i(\pi_1(Z))\subset \GL_{N}(\bF_q[[t]])$.  For any matrix $A\in \GL_N(K)$,  we denote by $\chi(A)=T^N+\sigma_1(A)T^{N-1}+\cdots+\sigma_N(A)$ its characteristic polynomial.    Then $\sigma_j(\iota^*\varrho_C(\gamma))\in \bF_q[C]$ for each $\gamma\in \pi_1(Z)$.  
  	Since we have assumed that $\iota^*\varrho_i(\pi_1(Z))\subset \GL_{N}(\bF_q[[t]])$ for each $i$, it follows that $\sigma_{j}(\iota^*\varrho_i(\gamma))\in \bF_q[[t]]$ for each $i$.  Therefore, by the definition of $\varrho_i$, $\nu_i\big(\sigma_j(\iota^*\varrho_C(\gamma))\big)\geq 0$ for each $i$.  It follows that $\sigma_j(\iota^*\varrho_C(\gamma))$  extends to a regular function on $\overline{C}$, which is thus constant.  Since  the conjugate classes of semisimple representations are determined by their characteristic polynomials, it follows that $j_Z\circ\pi_p(C_o)$ is a point, which contradicts to our assumption at the beginning. 	Hence there exists some $i$ such that $\iota^*\varrho_i:\pi_1(Z)\to \GL_N(\bF_q((t)))$ is unbounded.
  \end{proof}
  For each $i$, note that $[\varrho_i]\in M_{Y,p}(\bF_q((t)))$. It follows that $[f^*\varrho_i]=1$.  By \cref{lem:DY23bounded}, $f^*\varrho_i$ is bounded. Thanks to \cref{thm:KE}, $\iota^*\varrho_i$ is also bounded. This contradicts to \cref{lem:simple2}. 
\end{proof}

\begin{theorem}\label{thm:trivial2}
	Let $f: Y\to X$ be a morphism between connected smooth quasi-projective varieties and $Z$ be the closure of the image of $f$. For any prime number $p>0$,  there exists finitely many semisimple representations $\{\tau_i:\pi_1(Z)\to {\rm GL}_N(\overline{{\bF_p}})\}_{i=1,\ldots,\ell}$ such that     if $\varrho:\pi_1(X)\to {\rm GL}_N(\K)$ is a linear  representation where ${\rm char}\, \K=p$ such that $[f^*\varrho]=1$, then the semisimplification of $\iota^*\varrho:\pi_1(Z)\to {\rm GL}_N(\K)$ is conjugate to some $\tau_i$. Here $\iota:Z\hookrightarrow X$ is the natural inclusion.    In particular, $\iota^*\varrho(\pi_1(Z))$  is finite.
\end{theorem}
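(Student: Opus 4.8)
The plan is to reduce the general field $\K$ of characteristic $p$ to the case of a field finitely generated over $\bF_p$, and then exploit the combinatorial structure of $M_{Y,p}$ established in \cref{lem:finite}. First I would note that $\pi_1(X)$ is finitely presented, so there is a subfield $k\subset\K$ of finite transcendence degree over $\bF_p$ with $\varrho(\pi_1(X))\subset\GL_N(k)$; since the semisimplification $(\iota^*\varrho)^{ss}$ is the point $[\iota^*\varrho]\in M_{\rm B}(Z,N)_{\bF_p}(\overline{k})$, and since whether two representations are conjugate over $\overline{\K}$ is detected by their image in the character variety, it suffices to work over $\overline{k}$, i.e.\ I may assume $\K=\overline{\bF_p(t_1,\ldots,t_r)}$ for the purpose of locating $[\iota^*\varrho]$.

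Next I would observe that $[\varrho]\in\mxp(\K)$ lies in $M_{Y,p}(\K)$, because $[f^*\varrho]=1$ and $M_{Y,p}=j_Y^{-1}([1])$ is closed and defined over $\bF_p$. Applying $j_Z$ and using \cref{lem:finite}, the image $j_Z([\varrho])=[\iota^*\varrho]$ must be one of the finitely many $\overline{\bF_p}$-points $\{[\tau_i]\}_{i=1,\ldots,\ell}$ of $j_Z(M_{Y,p})$. Here I would be slightly careful: \cref{lem:finite} says $j_Z(M_{Y,p})$ is zero-dimensional, hence a finite set of closed points each of which, being zero-dimensional over $\bF_p$, has residue field algebraic over $\bF_p$; so any $\K$-point of $M_{Y,p}$ maps under $j_Z$ into the set of $\overline{\bF_p}$-points $\{[\tau_i]\}$. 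Therefore $[\iota^*\varrho]=[\tau_i]$ in $M_{\rm B}(Z,N)_{\bF_p}(\overline{\K})$ for some $i$, which by the character-variety description of conjugacy classes of semisimple representations means exactly that $(\iota^*\varrho)^{ss}$ is conjugate (over $\overline{\K}$, hence after enlarging the matrix field, over $\K$) to $\tau_i$.

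Finally, the ``in particular'' clause follows immediately from \cref{lem:finite group}: since $(\iota^*\varrho)^{ss}$ is conjugate to $\tau_i:\pi_1(Z)\to\GL_N(\overline{\bF_p})$, that lemma gives that $\iota^*\varrho(\pi_1(Z))$ is a finite group, and in particular $\iota^*\varrho$ is itself semisimple so it is genuinely conjugate to $\tau_i$. I would also remark that the $\tau_i$ may be taken to have finite image (indeed this is automatic as they are the semisimplifications at $\overline{\bF_p}$-points, and one can re-invoke \cref{lem:finite group} with $\Gamma=\pi_1(Z)$ and $\varrho=\tau_i$ viewed trivially).

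The main obstacle I anticipate is the bookkeeping around fields and base change: ensuring that ``$[f^*\varrho]=1$'' (a condition a priori over $\K$) really places $[\varrho]$ in the $\bF_p$-subscheme $M_{Y,p}$ and not merely in its base change to $\K$, and that a $\K$-point of a zero-dimensional $\bF_p$-scheme has image lying in the $\overline{\bF_p}$-points — both are formal but need the minimality of $\bF_p$ inside $\K$ and the finiteness in \cref{lem:finite}. The genuinely substantive input, namely that $j_Z(M_{Y,p})$ is finite, has already been done in \cref{lem:finite} via the unboundedness/\cref{thm:KE} argument, so the remaining proof is essentially a packaging of \cref{lem:finite} and \cref{lem:finite group}.
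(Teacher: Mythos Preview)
Your proposal is correct and follows essentially the same route as the paper: observe $[\varrho]\in M_{Y,p}(\K)$, apply \cref{lem:finite} so that $j_Z([\varrho])$ lands among the finitely many $\overline{\bF_p}$-points $\{[\tau_i]\}$, and then invoke \cref{lem:finite group} for the finiteness of the image. The paper's version is simply terser (it just assumes $\K$ algebraically closed without your finite-transcendence-degree reduction, and does not spell out the residue-field argument for the zero-dimensional image).

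One genuine slip in your final aside: finite image does \emph{not} imply semisimplicity in characteristic $p$ (for instance $\bZ/p\bZ$ acting on $\bF_p^2$ by $\begin{psmallmatrix}1&1\\0&1\end{psmallmatrix}$), so you cannot conclude that $\iota^*\varrho$ itself is conjugate to $\tau_i$. Only its semisimplification is, which is precisely what the theorem claims; drop that extra sentence and the argument is clean.
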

\begin{proof}
	Without loss of generality, we can assume that $\K$ is algebraically closed. Note that $[\varrho]\in M_{Y,p}(\K)$ as $[f^*\varrho]=1$.  By \cref{lem:finite}, $j_Z([\varrho])=[\tau_i]$ for some semisimple $\tau_i:\pi_1(Z)\to {\rm GL}_N(\overline{\bF_p})$ therein.  Hence $\tau_i$ is conjugate to the semisimplification of  $\iota^*\varrho$.   By \cref{lem:finite group}, we conclude that $\iota^*\varrho(\pi_1(Z))$  is finite. 
\end{proof}

\begin{remark}
	In 	   \cite[Theorem A]{DY23b}, the first author and Yamanoi constructed the Shafarevich morphism for any   representation  $\varrho:\pi_1(X)\to {\rm GL}_N(K)$ where $X$ is a quasi-projective normal variety and $K$ is a field of positive characteristic. As a byproduct,   a  similar result compared with \cref{thm:trivial2} is proved: if $\varrho:\pi_1(X)\to {\rm GL}_N(K)$ is a  linear representation such that $f^*\varrho(\pi_1(Y))$ has finite image, then $\iota^*\varrho(\pi_1(Z))$  is  also finite. 
\end{remark}

\section{Proof of \texorpdfstring{\cref{main2}}{Theorem B}}\label{general} 
\subsection{Character variety in characteristic zero}
\begin{theorem}\label{thm:quasi-finiteness}
Let $f: Y\to X$ be a morphism between two connected smooth quasi-projective varieties and $Z$ be the Zariski closure of the image of $f$. If  ${\rm Im}[\pi_1(Z)\to \pi_1(X)]$  is a finite index subgroup of $\pi_1(X)$,  then $j_Y:M_{\rm B}(X,N)\to M_{\rm B}(Y,N)$ is a quasi-finite morphism.  
\end{theorem}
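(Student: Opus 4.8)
(This is the characteristic-zero statement \cref{thm:quasi-finiteness}; the positive-characteristic statement \cref{thm:quasi-finiteness2} follows along the same lines, replacing the non-archimedean place $\nu$ below by the formal Laurent series valuation used in the proof of \cref{lem:finite}.) The plan is to prove that every fibre of $j_Y$ over a closed point of $M_{\rm B}(Y,N)$ is finite; since $j_Y$ is a morphism of schemes of finite type over $\Q$ and $M_{\rm B}(Y,N)$ is Jacobson, upper semicontinuity of fibre dimension then forces all fibres to be $0$-dimensional, so that $j_Y$ is quasi-finite. So fix a closed point, represented by a semisimple $\sigma:\pi_1(Y)\to\GL_N(\overline{\Q})$ with $\sigma(\pi_1(Y))\subset\GL_N(k)$ for a number field $k$, and assume for contradiction (the fibre being non-empty, else there is nothing to prove) that $j_Y^{-1}([\sigma])$ is positive-dimensional. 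Write $\Gamma_1:=\mathrm{Im}[\pi_1(Z)\to\pi_1(X)]$, a finite-index subgroup of $\pi_1(X)$.

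First I record the one place where the finite-index hypothesis enters: $j_Z:M_{\rm B}(X,N)\to M_{\rm B}(Z,N)$ is quasi-finite. It factors through the restriction morphism $M_{\rm B}(\pi_1(X),N)\to M_{\rm B}(\Gamma_1,N)$ followed by pull-back along the surjection $\pi_1(Z)\twoheadrightarrow\Gamma_1$ (a closed immersion), so it suffices to see that restriction to $\Gamma_1$ has finite fibres. If $\varrho:\pi_1(X)\to\GL_N(\overline{\Q})$ is semisimple then so is $\varrho|_{\Gamma_1}$ (Clifford theory, valid in characteristic zero), and by Frobenius reciprocity every irreducible constituent of $\varrho$ embeds into $\mathrm{Ind}_{\Gamma_1}^{\pi_1(X)}(\varrho|_{\Gamma_1})$, a representation of dimension $N[\pi_1(X):\Gamma_1]$; thus a prescribed restriction to $\Gamma_1$ leaves only finitely many possible constituents, hence (as $\dim\varrho=N$ is fixed) only finitely many classes $[\varrho]$. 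In particular $j_Z$ contracts no irreducible curve to a point.

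Consequently, running the argument of \cref{prop:zero_dimensional}, I may choose an irreducible affine curve $C\subset\pi_X^{-1}\big(j_Y^{-1}([\sigma])\big)\subset R_{\rm B}(X,N)$ on which $j_Z\circ\pi_X$ is non-constant, together with a morphism $\psi:M_{\rm B}(Z,N)\to\A^1$ with $\psi\circ j_Z\circ\pi_X|_C$ non-constant, all defined over a number field $k$ (enlarged if necessary so that $\sigma(\pi_1(Y))\subset\GL_N(k)$ as well). Now choose a non-archimedean place $\nu$ of $k$ at which $\sigma$ is bounded, i.e. $\sigma(\pi_1(Y))\subset\GL_N(O_{k_\nu})$; all but finitely many places $\nu$ have this property. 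With $\K:=k_\nu$, the proof of \cref{prop:zero_dimensional} applies unchanged: on a suitable Zariski open $U\subset\A^1$ the map $\psi\circ j_Z\circ\pi_X|_C$ is finite, the points of $C$ above $U(\K)$ lie in $C(\mathbb{L})$ for one finite extension $\mathbb{L}/\K$, and since $U(\K)$ is unbounded while the set $M_0\subset M_{\rm B}(Z,N)(\mathbb{L})$ of classes of bounded representations is compact (\cref{lem:Yamanoi}) hence bounded (\cref{lem:bounded_and_compact}), there exists $\varrho\in C(\mathbb{L})$ with $[\iota^*\varrho]\notin M_0$. Passing to $\varrho^{ss}$ and using \cref{lem:DY23bounded}, I obtain a semisimple $\varrho:\pi_1(X)\to\GL_N(\mathbb{L})$ with $[f^*\varrho]=[\sigma]$ such that $\iota^*\varrho:\pi_1(Z)\to\GL_N(\mathbb{L})$ is unbounded.

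To reach the contradiction: since $[f^*\varrho]=[\sigma]$ and $\sigma$ is bounded over $\mathbb{L}$, \cref{lem:DY23bounded} shows $f^*\varrho$, i.e. the restriction of $\varrho$ to $\mathrm{Im}[\pi_1(Y)\to\pi_1(X)]$, is bounded. As $Y$ is connected and smooth it is irreducible, hence so is $Z$; writing $Z^{\mathrm{norm}}$ for its normalisation, the dominant morphism $Y\to Z$ lifts to a dominant $g:Y\to Z^{\mathrm{norm}}$, for which $g_*\pi_1(Y)$ has finite index in $\pi_1(Z^{\mathrm{norm}})$ (a standard property of dominant morphisms onto a normal variety, e.g. via the Stein factorisation of a proper model). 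Hence $\mathrm{Im}[\pi_1(Y)\to\pi_1(X)]$ has finite index in $\mathrm{Im}[\pi_1(Z^{\mathrm{norm}})\to\pi_1(X)]$, so $\varrho$ is also bounded on the latter, and \cref{thm:KE} applied to the reductive representation $\varrho$ and the connected Zariski closed subset $Z\subset X$ gives that $\varrho(\mathrm{Im}[\pi_1(Z)\to\pi_1(X)])=\iota^*\varrho(\pi_1(Z))$ is bounded — contradicting the previous paragraph. Therefore $j_Y^{-1}([\sigma])$ is $0$-dimensional. Beyond transcribing the non-archimedean escape-to-infinity argument of \cref{prop:zero_dimensional}, the only genuinely delicate point is this last paragraph: the bookkeeping with the four fundamental groups and the transport of boundedness along finite-index inclusions before applying \cref{thm:KE}, together with the (automatic, since $U(\K)$ is always unbounded) compatibility of the choice of $\nu$ with the escape argument on $C$.
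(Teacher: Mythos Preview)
Your proof is correct, but it takes a detour the paper avoids. The paper runs the escape-to-infinity argument directly in $M_{\rm B}(X,N)$: choosing $\psi:j_Y^{-1}([\sigma])\to\A^1$ with dense image and a curve $C\subset\mathfrak R$ on which $\psi\circ\pi_X$ is generically finite, it produces $\tau=\varrho^{ss}$ \emph{unbounded on $\pi_1(X)$}. Then \cref{thm:KE} gives $\tau(\mathrm{Im}[\pi_1(Z)\to\pi_1(X)])$ bounded, and the finite-index hypothesis is invoked only at the last line to conclude $\tau(\pi_1(X))$ bounded, a contradiction. Your argument instead routes the escape through $M_{\rm B}(Z,N)$, which forces you to first prove that $j_Z$ is quasi-finite via Clifford theory and Frobenius reciprocity; this preliminary step is where you spend the finite-index hypothesis, and it is unnecessary --- the paper never needs $j_Z$ quasi-finite. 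In exchange, your detour buys two minor economies: you avoid citing Mochizuki's result that $f^*\tau$ is semisimple (the paper needs this to identify $f^*\tau$ with $\sigma$; you only need $[f^*\varrho]=[\sigma]$ and \cref{lem:DY23bounded}), and your explicit passage through $Z^{\rm norm}$ and the finite index of $g_*\pi_1(Y)$ in $\pi_1(Z^{\rm norm})$ makes the invocation of \cref{thm:KE} match its stated form more transparently than the paper's one-line citation. One small slip: after passing to $\varrho^{ss}$ you write $\varrho:\pi_1(X)\to\GL_N(\mathbb L)$, but the semisimplification is a priori only defined over a finite extension $\mathbb L'/\mathbb L$; the paper handles this explicitly, and you should too.
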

The proof of \cref{thm:quasi-finiteness} is similar to that of \cref{prop:zero_dimensional}.
\begin{proof}
Since $j_Y$ is a $\mathbb{Q}$-morphism of affine schemes of finite type, it suffices to prove that for any $\overline{\mathbb{Q}}$-point $x$ of $M_{\rm B}(Y,N)$, $j_Y^{-1}(x)$ is a finite set of $\overline{\mathbb{Q}}$-point. We assume by contradiction that there exists a semisimple representation $\sigma:\pi_1(Y)\to\mathrm{GL}_N(\overline{\mathbb{Q}})$ such that $j_Y^{-1}([\sigma])$ is not finite. 

Set $M:=j_Y^{-1}([\sigma])$. It is a positive dimensional closed subscheme of $M_{\rm B}(X, N)$ defined over $\overline{\mathbb{Q}}$. Since $\pi_1(Y)$ is finitely generated, there exists a number field $k$ such that $\sigma:\pi_1(Y)\to \mathrm{GL}_N(k)$. Moreover, there exists a non-archimedean place $\nu$ such that $\sigma:\pi_1(Y)\to \mathrm{GL}_N(k_{\nu})$ is bounded,  where $k_{\nu}$ is the completion of $k$ with respect to  $\nu$. 
 
 Let $\mathfrak{R}:=\pi_X^{-1}(M)\subset R_\mathrm{B}(X,N)$, where $\pi_X: R_\mathrm{B}(X, N)\to M_{\rm B}(X, N)$  is the GIT quotient. Since $M$ is a positive dimensional affine scheme defined over $\overline{\Q}$, there exists a $\overline{\Q}$-morphism $\psi: M\to \mathbb{A}^1$ whose image is Zariski dense. Since $\pi_X$ is surjective, we can find a closed irreducible curve $C\subset \mathfrak{R}$  such that $\psi\circ \pi_X|_{C}: {C}\to \A^1$ is   generically finite. 

Let $\K$ be a finite extension of $k_{\nu}$ such that $C$ is defined over $\K$ and $\psi\circ \pi_X|_{C}$ is a morphism of $\K$-schemes. We can find an open subset ${U}\subset \A^1$ over which the above map is finite.  Take $x\in U(\K)$ and $\varrho\in C(\overline{\K})$ over $x$, then $\varrho$ is defined over some finite extension of $\K$ whose degree is controlled above by the degree of $\psi\circ \pi_X|_{C}$. There are finitely many extensions and we can assume that all points over $U(\K)$ are contained in $C(\mathbb{L})$, where $\mathbb{L}$ is a finite extension of $\K$. Since $U(\K)$ is unbounded, we have $\psi\circ \pi_X(C(\mathbb{L}))\subset \A^1(\mathbb{L})$ is unbounded.
 
Take $R_0$ to be the set of all bounded representations in $R_\mathrm{B}(X, N)(\mathbb{L})$, then $M_0:=\pi_X(R_0)$ is compact in $M_\mathrm{B}(X, N)(\mathbb{L})$ with respect to analytic topology by  \cref{lem:Yamanoi} and  $\psi(M_0)$ is also bounded in $\A^1(\mathbb{L})$ by \cref{lem:bounded_and_compact}. It follows that there exists $\varrho\in C(\mathbb{L})$ such that $\pi_X (\varrho)=[\varrho]\notin M_0$. Let $\tau=\varrho^{\mathrm{ss}}:\pi_1(X)\to {\rm GL}_N(\overline{\mathbb{L}})$ be the semisimplification of $\varrho$.  By \Cref{lem:DY23bounded}, $\tau:\pi_1(X)\to {\rm GL}_N(\overline{\mathbb{L}})$  is also unbounded. Note that $[\tau]=[\varrho]\in M_{\rm B}(X,N)(\overline{\mathbb{L}})$. By the definition of $C$, we have $[f^*\tau]=j_Y([\tau])=[\sigma]$ as points in $M_{\rm B}(Y,N)(\overline{\mathbb{L}})$. Note that $f^*\tau$ is semisimple by \cite[Theorem 25.30]{Moc07b}, it follows that $f^*\tau$ is conjugate to $\sigma:\pi_1(Y)\to {\rm GL}_N(\overline{\mathbb{L}})$.

Let $\mathbb{L}'$ be a finite extension of $\mathbb{L}$ such that $\tau:\pi_1(X)\to \mathrm{GL}_N(\mathbb{L}')$. We think of $\sigma$ as a representation $\sigma:\pi_1(Y)\to \mathrm{GL}_N(\mathbb{L}')$, which is bounded as $\mathbb{L}'$ is a finite extension of $k_\nu$.    Hence $f^*\tau:\pi_1(Y)\to {\rm GL}_N(\mathbb{L}') $ is also bounded.  This implies that $\tau(\mathrm{Im}[\pi_1(Z)\to \pi_1(X)])$ is bounded thanks to \cref{thm:KE}. Since we assume that   $\mathrm{Im}[\pi_1(Z)\to \pi_1(X)]$ is a finite index subgroup of $\pi_1(X)$, it follows that $\tau( \pi_1(X))$ is also  bounded. This contradicts that   $\tau$ is unbounded. Therefore, $j_Y^{-1}([\sigma])$ is a finite set. Since $[\sigma]$ is an arbitrary point in $M_{\rm B}(Y,N)(\overline{\Q})$, it follows that $j_Y$ is quasi-finite.  We proved the theorem. 
\end{proof}
\begin{remark}\label{rem:Lasell}
The above theorem generalizes   \cite[Theorem 6.1]{Lasell1995}    to the quasi-projective setting.   
We remark that the original proof by Lasell in the projective case is quite involved and is substantially built on Deligne's mixed Hodge theory,  Simpson's work of harmonic bundles \cite{Sim92} and his construction of moduli space of semistable Higgs bundles   $M_{\mathrm{Dol}}$ in \cite{Sim94, Sim94b}. Notably, the construction of  $M_{\mathrm{Dol}}$ in the quasi-projective cases has not been established yet.   Therefore, we cannot apply the same method by Lasell to prove  \cref{thm:quasi-finiteness}.   
\end{remark}

\subsection{Character variety in positive characteristic}
 \begin{theorem}\label{thm:quasi-finiteness2}
 	Let $f: Y\to X$ be a morphism between connected smooth quasi-projective varieties and $Z$ be the closure of the image of $f$. If  ${\rm Im}[\pi_1(Z)\to \pi_1(X)]$  is a finite index subgroup of $\pi_1(X)$, then $j_Y:\mxp\to \myp$ is a quasi-finite morphism.  
 \end{theorem}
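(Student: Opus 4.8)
The plan is to run the argument of \cref{thm:quasi-finiteness} in positive characteristic, substituting for the archimedean input there the Laurent series construction already used in the proof of \cref{lem:finite}. Since $j_Y:\mxp\to\myp$ is a morphism of affine $\bF_p$-schemes of finite type, it is quasi-finite as soon as all its fibres are finite, and by upper semicontinuity of fibre dimension together with the fact that these schemes are Jacobson, I would first reduce to showing that $j_Y^{-1}(y)$ is finite for every \emph{closed} point $y\in\myp$. The decisive feature of this reduction is that such a $y$ has finite residue field, hence is represented --- over some finite extension of that residue field --- by a semisimple $\sigma:\pi_1(Y)\to\GL_N(\bF_q)$ with \emph{finite} image, so $\sigma$ is automatically bounded with respect to every non-archimedean valuation on every overfield of $\bF_q$. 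This is exactly what makes the positive characteristic case simpler than \cref{thm:quasi-finiteness}: one does not have to hunt for a place of a number field over which $\sigma$ becomes bounded.

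Assume for contradiction that $M:=j_Y^{-1}(y)$, regarded as a closed subscheme of $\mxp$, is positive dimensional. I would then copy the proof of \cref{lem:finite}: setting $\mathfrak R:=\pi_p^{-1}(M)\subset R_{\bF_p}$ and using surjectivity of $\pi_p$, choose an affine irreducible curve $C_o\subset\mathfrak R$ defined over $\overline{\bF_p}$ with $\pi_p(C_o)$ one-dimensional (replacing $q$ by a power, one may assume $C_o$ and $\sigma$ are defined over $\bF_q$). Pass to the normalization $C$ of $C_o$, its smooth compactification $\overline C$, and the $\bF_q$-points $\{P_1,\dots,P_m\}=\overline C\setminus C$; the universal property of the representation scheme gives $\varrho_C:\pi_1(X)\to\GL_N(\bF_q[C])$, and completing $\bF_q(C)$ at the valuation $\nu_i$ of $P_i$ yields the local field $\bF_q((t))$ and representations $\varrho_i:\pi_1(X)\to\GL_N(\bF_q((t)))$. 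Running the argument of \cref{lem:simple2} with $\pi_1(X)$ in place of $\pi_1(Z)$: if every $\varrho_i$ were bounded, then the coefficients of the characteristic polynomials of $\varrho_C(\gamma)$, $\gamma\in\pi_1(X)$, would extend to regular functions on $\overline C$, hence be constant, forcing $\pi_p(C_o)$ to be a point and contradicting its choice. So, after renumbering, $\varrho_1:\pi_1(X)\to\GL_N(\bF_q((t)))$ is unbounded.

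To conclude, I would pass to the semisimplification $\tau:=\varrho_1^{\mathrm{ss}}:\pi_1(X)\to\GL_N(\overline{\bF_q((t))})$, which is reductive and, since $[\tau]=[\varrho_1]$, still unbounded by \cref{lem:DY23bounded}. As $C_o\subset\pi_p^{-1}(M)$, we have $[\varrho_1]\in M=j_Y^{-1}(y)$, so $[f^*\tau]=j_Y([\varrho_1])=y=[\sigma]$ in $\myp$; since $\sigma$ is bounded, \cref{lem:DY23bounded} (applied over $\overline{\bF_q((t))}$) gives that $f^*\tau$ is bounded. Now $\tau$ is reductive and $f^*\tau$ is bounded, so the argument proving \cref{prop:compact&bounded}(b) --- i.e. \cref{thm:KE} applied to $\tau$ and the connected closed subset $Z$ --- shows that $\tau(\mathrm{Im}[\pi_1(Z)\to\pi_1(X)])$ is a bounded subgroup. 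By the hypothesis that $\mathrm{Im}[\pi_1(Z)\to\pi_1(X)]$ has finite index in $\pi_1(X)$, writing $\pi_1(X)$ as a finite union of cosets shows $\tau(\pi_1(X))$ is a finite union of translates of a bounded set, hence bounded --- contradicting the unboundedness of $\tau$. Thus $j_Y^{-1}(y)$ is finite for every closed $y$, and $j_Y$ is quasi-finite. I expect the only genuinely delicate point to be the bookkeeping forced by non-reductivity: \cref{thm:KE} needs a reductive representation, so one must replace $\varrho_1$ by $\tau=\varrho_1^{\mathrm{ss}}$ and transport boundedness --- both of $\tau$ and of $f^*\tau$ --- through \cref{lem:DY23bounded} before \cref{thm:KE} becomes applicable; this is precisely the mechanism already present, but only sketched, in the proof of \cref{lem:finite}.
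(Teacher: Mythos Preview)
Your proposal is correct and follows essentially the same route as the paper: reduce to closed points $[\sigma]\in\myp(\overline{\bF_p})$, assume a positive-dimensional fibre, produce a curve $C_o$ in the representation scheme whose image under $\pi_p$ is one-dimensional, complete at the points at infinity to obtain representations into $\GL_N(\bF_q((t)))$, use the characteristic-polynomial argument (\cref{lem:simple2}) to find an unbounded $\varrho_i$, and then derive a contradiction via \cref{lem:DY23bounded} and \cref{thm:KE} using that $\sigma$ has finite image and that ${\rm Im}[\pi_1(Z)\to\pi_1(X)]$ has finite index. The only cosmetic difference is that the paper carries all $\tau_i$ through the final step and shows every $\varrho_i$ is bounded, whereas you single out the unbounded one; either way the contradiction is the same.
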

  \begin{proof}
Since $j_Y:\mxp\to \myp$ is a morphism of affine $\bF_p$-schemes of finite type,  	it suffices to prove that $j_Y^{-1}([\sigma])$ is zero-dimensional for any  closed point $[\sigma]\in \myp(\overline{{\bF}_p})$.  Assume, for the sake of contradiction, that $j_Y^{-1}([\sigma])$ is positive dimensional for some semisimple $\sigma:\pi_1(Y)\to \GL_{N}(\overline{\bF}_p)$.  Write $R_{\bF_p} $ for $R_{\rm B}(X,N)_{\bZ}  \times_{\spec\, \bZ}\spec\, \bF_p$.   Since the GIT quotient $\pi_p: R_{\bF_p}\to \mxp$ is a surjective morphism between affine $\bF_p$-schemes of finite type, we can find an irreducible affine curve $C_o\subset (j_Y\circ\pi_p)^{-1}([\sigma])$ defined over $\overline{\bF_p}$ such that $\pi_p(C_o)$  is positive dimensional. Let $\overline{C}$ be a compactification of the normalization $C$ of $C_o$, and let $\{P_1,\ldots,P_\ell\}= \overline{C}\setminus C$.  Let $q=p^n$ such that $\overline{C}$ is defined over $\bF_q$ and $P_i\in R_{\bF_p}(\bF_q)$ for each $i$. 
 	
By the universal property of the representation scheme $R$, $C$ gives rise to a representation $\varrho_C:\pi_1(X)\to \GL_N(\bF_q[C])$, where $\bF_q[C]$ is the coordinate ring of $C$. Consider the discrete valuation $\nu_i:\bF_q(C)\to \bZ$ defined by $P_i$, where $\bF_q(C)$ is the function field of $C$. Let $\widehat{\bF_q(C)}_{\nu_i}$  be the completion of $F_{q}(C)$ with respect to $\nu_i$. Then we have $\big(\widehat{\bF_q(C)}_{\nu_i},\nu_i\big)\simeq \big(\bF_q((t)),\nu\big)$, where $ \big(\bF_q((t)),\nu\big)$ is the formal Laurent field of $\bF_p$ with the valuation $\nu$ defined by  $\nu(\sum_{i=m}^{+\infty}a_it^i)=\min \{i\mid a_i\neq 0\}$.    Let $\varrho_i:\pi_1(X)\to \GL_N(\bF_q((t)))$ be the extension of ${\varrho_{C}}$ with respect to $\big(\widehat{\bF_q(C)}_{\nu_i},\nu_i\big)$.  
\begin{claim}\label{lem:simple}
There exists some $i$ such that $\varrho_i$ is unbounded.
\end{claim}
\begin{proof}
	This proof is in the same vein as \cref{lem:simple2} and we repeat  it here for   completeness. Assume by contradiction that $\varrho_i$ is bounded for each $i$. Then  after we replace $\varrho_i$ by some conjugation, we have $\varrho_i(\pi_1(X))\subset \GL_{N}(\bF_q[[t]])$.  For any matrix $A\in \GL_N(K)$,  we denote by $\chi(A)=T^N+\sigma_1(A)T^{N-1}+\cdots+\sigma_N(A)$ its characteristic polynomial.    Then $\sigma_j(\varrho_C(\gamma))\in \bF_q[C]$ for each $\gamma\in \pi_1(X)$.  
	Since we have assumed that $\varrho_i(\pi_1(X))\subset \GL_{N}(\bF_q[[t]])$ for each $i$, it follows that $\sigma_{j}(\varrho_i(\gamma))\in \bF_q[[t]]$ for each $i$.  Therefore, by the definition of $\varrho_i$, $\nu_i\big(\sigma_j(\varrho_C(\gamma))\big)\geq 0$ for each $i$.  It follows that $\sigma_j(\varrho_C(\gamma))$  extends to a regular function on $\overline{C}$, which is thus constant.  Hence $\pi_p(C_o)$ is a point, leading to a contradiction.     
\end{proof}  
 Let $\tau_i:\pi_1(X) \to \GL_N(\overline{\bF_q((t))})$ be the semisimplification of $\varrho_i$. 	
Note that  for the representation $f^*\tau_i:\pi_1(Y)\to \GL_N(\overline{\bF_q((t))})$,  its semisimplification $(f^*\tau_i)^{ss}$ is conjugate to $\sigma:\pi_1(Y)\to\GL_{N}(\overline{\bF_p})$. Note that  $\sigma(\pi_1(X))$ has  finite image.    Hence $(f^*\tau_i)^{ss}$ is bounded and by \Cref{lem:DY23bounded}, $f^*\tau_i$ is bounded.  By virtue of \cref{thm:KE}, $\iota^*\tau_i$ is also bounded, where $\iota:Z\to X$ is the inclusion. Since ${\rm Im}[\pi_1(Z)\to \pi_1(X)]$ is a finite index subgroup of $\pi_1(X)$, it follows that $\tau_i$ is bounded, and thus $\varrho_i$ is bounded by    \Cref{lem:DY23bounded} for any $i$.  This contradicts to \cref{lem:simple}.   The theorem is proved.
 \end{proof} 

\medspace

\noindent \textbf{Acknowledgements}. 
YD would like to thank Michel Brion for very helpful discussions.  This work was started when he visited University of Science and Technology of China (USTC) in July 2023, and he  expresses gratitude for the warm hospitality. YD is partially supported by the French Agence Nationale de la Recherche (ANR) under reference ANR-21-CE40-0010. YL expresses gratitude to Prof. Bing Wang for the support provided during his postdoctoral period at USTC.

\end{document}